\allowdisplaybreaks \linespread{1.2}
\newtheorem{theorem}{Theorem}[section]
\newtheorem{lemma}[theorem]{Lemma}
\newtheorem{corollary}[theorem]{Corollary}
\newtheorem{proposition}[theorem]{Proposition}
\theoremstyle{definition}
\newtheorem{example}[theorem]{Example}
\newtheorem{definition}[theorem]{Definition}
\newtheorem{definition-lemma}[theorem]{Definition-Lemma}
\newtheorem{definition-theorem}[theorem]{Definition-Theorem}
\newtheorem{remark}[theorem]{Remark}
\newtheorem{proof*}{Proof}
\begin{document}

\title{Non-Embedding Theorems of Nilpotent Lie Groups and Sub-Riemannian Manifolds}

\author[1,3]{Yonghong Huang\thanks{(\Letter) Email: yonghonghuangmath@gmail.com. Partially supported by NSFC (No. 11771303).}}

\author[1,2]{Shanzhong Sun\thanks{Email: sunsz@cnu.edu.cn. Partially supported by NSFC (No. 11771303) and Beijing Advanced Innovation Center for Imaging Theory and Technology, Capital Normal University.} }

\renewcommand\Affilfont{\small}

\affil[1]{Department of Mathematics, Capital Normal University,
Beijing 100048 P. R. China}
\affil[2]{Academy for Multidisciplinary Studies, Capital Normal  University, Beijing 100048 P. R.
China}
\affil[3]{Qiannan Preschool Education College for
Nationalities, Guiding Guizhou 558000 P. R.
China}

\date{}

\maketitle
\renewcommand{\abstractname}{Abstract}

\begin{abstract}
In this paper, we prove that there do not exist quasi-isometric
embeddings of connected non-abelian nilpotent Lie groups equipped
with left invariant Riemannian metrics into a metric measure space
satisfying the curvature-dimension condition $RCD(0,N)$ with
$N\in\mathbb{R}$ and $N>1$. In fact, we can prove that
a sub-Riemannian manifold whose generic degree of
nonholonomy is not smaller than $2$ can not be bi-Lipschitzly
embedded in any Banach space with the Radon-Nikodym property. 
 We also get that every regular
sub-Riemannian manifold do not satisfy the curvature-dimension
condition $CD(K,N)$, where $K,N\in\mathbf{R}$ and $N>1$. Along the way to the proofs, we show that the minimal weak upper gradient (Def. \ref{defwg}) and the horizontal gradient (Def. \ref{defhg}) coincide on the Carnot-Carath\'eodory spaces which may have independent interests (Thm. \ref{thmih}).\\
\textbf{Keywords:} Nilpotent Lie Group; sub-Riemannian manifold;
Curvature-dimension condition; Bi-Lipschitz embedding.
\end{abstract}

\section{Introduction}

In \cite{sp}, Pauls proved that there do not exist quasi-isometric embeddings of
any connected, simply connected nonabelian nilpotent Lie groups
equipped with left invariant Riemannian metric into either a $CAT_0$
metric space or an Alexandrov metric space with nonnegative
curvature bounded. Inspired by Pauls' work, we address ourselves the following problem whether such connected nilpotent
Lie groups equipped with left invariant Riemannian metrics can be
quasi-isometricaly embedded into metric measure spaces.
A metric measure space is a triple $(M,d,m)$, where $(M,d)$ is a complete and separable metric
space and $m$ is a locally finite (i.e., $m(B_{r}(x))<\infty$
for all $x\in M$ and for all sufficiently small $r>0$) nonnegative complete Borel measure on $M$
equipped with its Borel $\sigma$-algebra. To avoid pathologies, we
exclude the case $m(M)=0$. 
Let $(G,g)$ be a connected
nilpotent Lie group with a left invariant Riemannian metric and $d$
be the induced distance function on $G$. If $(X,d_X)$ is a complete
metric space, then ${f:G\rightarrow X }$ is called an
$(L,C)$-quasi-isometric embedding if, for all $x,y \in G$,
\[ \frac{1}{L}d(x,y)-C
 \le
d_{X}(f(x),f(y))\le
  {L}d(x,y)+C.\]
We get the following theorem
\begin{theorem}\label{thm1.1}
There does not exist a quasi-isometric embedding of any nonabelian nilpotent Lie group equipped with a left invariant Riemannian metric
into any metric measure space satisfying $RCD(0,N)$, with $K,N\in\mathbf{R}$ and $N>1$, where $(M,d)$ is a length space and $supp(m)=M$.
\end{theorem}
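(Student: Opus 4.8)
The plan is to prove Theorem~\ref{thm1.1} by passing to asymptotic cones and reducing it to a bi-Lipschitz non-embedding statement for sub-Riemannian manifolds. First I would recall Pansu's theorem: the tangent cone at infinity (asymptotic cone) of a connected, simply connected nonabelian nilpotent Lie group $N$ with a left-invariant Riemannian metric is its associated graded Carnot group $G$, equipped with a Carnot--Carath\'eodory (sub-Riemannian) metric. Since $N$ is nonabelian, the first commutator of its Lie algebra is nontrivial, so $G$ is a step-$\geq 2$ Carnot group; in particular $G$ is a regular sub-Riemannian manifold whose generic degree of nonholonomy is at least $2$. This identifies the large-scale geometry of the source with an object to which the sub-Riemannian obstruction is designed to apply.

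Next I would show that a quasi-isometric embedding $f\colon N\to X$ survives the blow-down as a bi-Lipschitz embedding of cones. Rescaling both metrics by $1/t$ and taking an ultralimit (or Gromov--Hausdorff limit) as $t\to\infty$, the additive constant $C$ in the quasi-isometry inequality is killed while the multiplicative constant $L$ persists, so $f$ induces a map $f_\infty\colon G\to X_\infty$ satisfying $\tfrac{1}{L}d(x,y)\le d_{X_\infty}(f_\infty(x),f_\infty(y))\le L\,d(x,y)$, i.e.\ a bi-Lipschitz embedding of $G$ into the asymptotic cone $X_\infty$ of $X$. Because $X$ satisfies $CD(0,N)$ with finite $N$, the Bishop--Gromov inequality forces $(X,d,m)$ to be locally doubling, so the asymptotic cone exists along an ultrafilter; and by the stability of the curvature-dimension condition under pointed measured Gromov--Hausdorff convergence (Sturm, Lott--Villani), $X_\infty$ again satisfies $CD(0,N)$.

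At this point the theorem reduces to showing that a step-$\geq 2$ Carnot group cannot be bi-Lipschitzly embedded into such a cone. Here I would use the differentiation machinery: $G$ is a PI space (Ahlfors regular and supporting a Poincar\'e inequality), and the $CD(0,N)$ target $X_\infty$ is to be placed in a setting where Lipschitz maps are differentiable almost everywhere --- concretely, by reducing the effective target to a Banach space with the Radon--Nikodym property, for which the Cheeger--Kleiner and Pansu--Rademacher differentiation theorems apply. A bi-Lipschitz image would then be differentiable a.e.\ with differential a graded group homomorphism; but the non-integrability of the horizontal distribution of $G$, whose iterated brackets generate the missing vertical directions, forces the differential to drop rank on those directions, contradicting the lower bi-Lipschitz bound. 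The main obstacle is precisely this last, target-side step: unlike Pauls's setting of $CAT_0$ or Alexandrov spaces, the condition $CD(0,N)$ is a synthetic Ricci (not sectional) bound and permits Finsler- and Banach-type geometry, so one cannot invoke metric curvature rigidity directly and must instead justify that the relevant differentiation theory --- hence the Radon--Nikodym property of the effective target --- is indeed available for $CD(0,N)$ cones.
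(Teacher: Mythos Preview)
Your strategy agrees with the paper's: pass to asymptotic cones, invoke Pansu on the source side to get a nonabelian Carnot group $G$, and invoke the stability of $CD(0,N)$ under rescaling/limits on the target side (the paper's Remark~\ref{remarkas}). The paper's proof is extremely terse and does not spell out the step ``quasi-isometric embedding $\Rightarrow$ bi-Lipschitz embedding of cones'', which you do supply; this step is genuinely needed and follows from an Arzel\`a--Ascoli type argument as in Theorem~\ref{thmcm}.

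Where you diverge is in the final reduction. You try to run Cheeger--Kleiner/Pansu differentiation directly against the $CD(0,N)$ cone $X_\infty$, and you correctly flag that this is the obstacle: those theorems require a target with the Radon--Nikodym property, and a bare $CD(0,N)$ space need not have anything of the sort. The paper takes a different route at this point: it invokes Theorem~\ref{thm1.2}, whose proof goes through Lemma~\ref{lemma srcd} and the Mondino--Naber structure theorem (Theorem~\ref{thmrcd}) to say that an $RCD(K,N)$ space has \emph{Euclidean} tangent cones $m$-a.e. One then blows up a second time: take the bi-Lipschitz embedding $G\hookrightarrow X_\infty$, pass to tangent cones at a Mondino--Naber point of $X_\infty$ using Corollary~\ref{corotc}, and obtain a bi-Lipschitz embedding of $G$ (which is its own tangent cone) into some $\mathbb{R}^k$. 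Now the effective target \emph{is} an RNP Banach space and Theorem~\ref{thm 1.3} (or Theorem~\ref{thmcn}) gives the contradiction. That second blow-up is what resolves the obstacle you identified.

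There is, however, a residual gap that affects both your write-up and the paper's: Mondino--Naber requires $RCD(0,N)$, i.e.\ infinitesimal Hilbertianity of the target, whereas the hypothesis is only $CD(0,N)$. In the paper this hypothesis is smuggled in through Lemma~\ref{lemma srcd}, which supplies infinitesimal Hilbertianity \emph{for the sub-Riemannian space itself}; but in the setting of Theorem~\ref{thm1.1} one needs it for the unknown target $X$ (or its cone $X_\infty$), and nothing in the statement guarantees that. Your last paragraph is right to worry about Finsler-type $CD(0,N)$ targets: without upgrading $CD$ to $RCD$ on the target side, neither the Mondino--Naber route nor your direct differentiation route goes through as stated. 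If you are content with the hypothesis $RCD(0,N)$, then the paper's second-blow-up argument via Theorem~\ref{thmrcd} is the cleanest way to close your proof; otherwise you would need to produce a Lipschitz differentiation theorem with general $CD(0,N)$ targets, which is not available in the paper.
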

It is very challenging to define metric spaces with a lower bound on
the curvature. Alexandrov \cite{ad} introduced the notion of lower
sectional curvature bound for metric spaces in terms of comparison
properties for geodesic triangles. For Ricci curvature bound, an
amazing theory has recently been developed independently by Sturm
\cite{kts} and Lott and Villani \cite{jl} in terms of
curvature-dimension condition $CD(K,N)$, where $K$ is the lower
bound of Ricci curvature and $N$ is the upper bound of Hausdorff
dimension. Sub-Riemannian manifolds with their Carnot-Carath\'eotory
metrics and Riemannian volumes are natural metric measure spaces.
Through the method from metric geometry, we can prove the following result
\begin{theorem}\label{thm1.2}
Let $(M,\Delta,g)$ be a regular sub-Riemannian manifold. Then $M$ with the Carnot-Carath\'eodory
metric $d$ induced by $g$ and a Riemannian volume, as a metric measure space does not satisfy any $CD(K,N)$, where $K,N\in\mathbf{R}$
and $N>1$.
\end{theorem}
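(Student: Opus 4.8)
The plan is to argue by contradiction, using the bi-Lipschitz non-embedding statement for sub-Riemannian manifolds whose generic degree of nonholonomy is at least $2$ (the ``in fact'' result announced in the abstract) together with the rectifiability of finite-dimensional spaces satisfying a curvature-dimension bound. Suppose a regular sub-Riemannian manifold $M$, equipped with its Carnot-Caratheodory distance $d_{cc}$ and Riemannian volume $m$, satisfied $CD(K,N)$ for some $K\in\mathbf{R}$ and finite $N>1$. Since $M$ is regular and the defining distribution is proper and bracket-generating, the generic degree of nonholonomy is at least $2$, so the embedding obstruction applies and shows that no subset of $M$ of positive $m$-measure admits a bi-Lipschitz embedding into a Banach space with the Radon-Nikodym property. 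The contradiction will come from manufacturing exactly such an embedding out of the $CD(K,N)$ hypothesis.

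First I would record the structural consequences of $CD(K,N)$. A geodesic sub-Riemannian manifold is non-branching, so under the standing hypothesis $(M,d_{cc},m)$ is an essentially non-branching $CD(K,N)$ space with $N<\infty$. Such spaces are rectifiable: up to an $m$-null set, $M$ is covered by countably many Borel charts $\varphi_i\colon A_i\to M$ that are bi-Lipschitz onto their images, with each $A_i$ a subset of some Euclidean space $\mathbb{R}^{k_i}$. Because $\mathbb{R}^{k_i}$ is reflexive it has the Radon-Nikodym property, so each inverse chart $\varphi_i^{-1}$ is a bi-Lipschitz embedding of a positive-measure piece of $M$ into an RNP Banach space. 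This directly contradicts the non-embedding statement recalled above, and therefore $M$ cannot satisfy any $CD(K,N)$ with $K\in\mathbf{R}$ and finite $N>1$.

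The main obstacle is the rectifiability input, and in particular making it available at the level of generality of $CD$ (as opposed to $RCD$) spaces: one must verify that the essential non-branching of $M$ suffices to invoke the rectifiability theorem in the form stated, or else first upgrade to the Riemannian curvature-dimension setting. A more self-contained alternative, avoiding rectifiability, is a blow-up argument: the curvature-dimension condition is stable under measured Gromov-Hausdorff convergence, and rescaling $d_{cc}$ by $\lambda$ sends $CD(K,N)$ to $CD(K/\lambda^2,N)$, so rescaling $(M,d_{cc},m)$ at a regular point by the anisotropic sub-Riemannian dilations converges, by Mitchell's theorem, to the nilpotent tangent cone, a Carnot group $G$ of step at least $2$, which would then inherit $CD(0,N)$; one then derives a contradiction on $G$ exactly as Juillet does for the Heisenberg group, via the failure of the measure-contraction behaviour forced by the homogeneous dimension $Q>\dim G$. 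Either way, the delicate point is the passage from the abstract curvature-dimension hypothesis to a genuinely Euclidean (or Carnot) infinitesimal model on a set of positive measure, where the step-$\geq 2$ sub-Riemannian pathology can be exhibited.
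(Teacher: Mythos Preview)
Your first line of attack---rectifiability plus the non-embedding theorem---is exactly the route the paper takes, and you have correctly put your finger on the one genuine obstacle: rectifiability in the form of Mondino--Naber is an $RCD(K,N)$ statement, not a $CD(K,N)$ one. What you are missing is the mechanism for the upgrade $CD\Rightarrow RCD$, and this is precisely the content of the paper's key lemma. The point is sub-Riemannian specific: on $(M,\Delta,g)$ the minimal weak upper gradient of a Sobolev function coincides $m$-a.e.\ with the modulus of the horizontal gradient $|\nabla_H f|$ (Haj\l asz--Koskela), so the metric Sobolev space $W^{1,2}(M,d_{cc},m)$ equals the horizontal Sobolev space $W^{1,2}_H(M)$. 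The latter is a Hilbert space simply because $g$ is an inner product on $\Delta$, hence $(M,d_{cc},m)$ is infinitesimally Hilbertian. Thus a $CD(K,N)$ sub-Riemannian manifold is automatically $RCD(K,N)$, Mondino--Naber forces Euclidean tangent cones $m$-a.e., and the contradiction with Mitchell's theorem (nonabelian Carnot tangent cones at regular points) follows, using the Carnot-group non-embedding to rule out any bi-Lipschitz identification with Euclidean space.

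Two further comments. First, your assertion that a geodesic sub-Riemannian manifold is non-branching is not safe in general (abnormal minimizers can produce branching), so your proposed route to rectifiability via ``essentially non-branching $CD$'' is shakier than you suggest; the infinitesimal-Hilbertian upgrade avoids this entirely. Second, your alternative blow-up argument does lead to a Carnot group satisfying $CD(0,N)$, but you then defer the contradiction to a Juillet-style computation that is only carried out in the literature for Heisenberg groups; the paper instead closes the loop uniformly by observing that once one is in the $RCD$ world, Euclidean tangents are forced everywhere a Carnot tangent of step $\geq 2$ is present.
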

Our main observation to prove the theorem is that in this case, the curvature-dimension condition $CD(K,N)$ implies $RCD(K,N)$ (Def. \ref{defrcd}). In fact, we prove that for any sub-Riemannian manifold with Carnot-Carath\'eodory metric and fixed Riemannian volume, the horizontal gradient and the minimal weak gradient coincide. More precisely, we prove the following
\begin{theorem}
Let $(M,\Delta,g)$ be a regular sub-Riemannian manifold with measure $m$ induced by Riemannian volume and Carnot-Carath\'{e}odory distance $d$. Then the two spaces $W^{1,2}(M,d,m)$ and $W_{H}^{1,2}(M,m)$ (Def. \ref{defhg}) coincide. So $(M,d,m)$ is an infinitesimally Hilbert space (Def. \ref{defihs}).
\end{theorem}
This is our Theorem \ref{thmih}, which together with the following Theorem \ref{thm 1.3}  forms the key ingredient to the proof of Theorem \ref{thm1.2}.

Cheeger and Kleiner \cite{jc} proved that the Heisenberg group with
the Carnot-Carath\'eodory metric does not admit a bi-Lipschitz
embedding into any Banach space satisfying the Radon-Nikodym
property (see Def. \ref{defrnp}). By refining their methods and ideas, we prove that the same result holds in the case of Carnot groups.
Then, applying a blow up argument, we have the following theorem


\begin{theorem}\label{thm 1.3} Any sub-Riemannian manifold whose generic
degree of non-holonomy is not smaller than $2$ can not be
 bi-Lipschitzly embedded into a Banach space with
the Radon-Nikodym property.
\end{theorem}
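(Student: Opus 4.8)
The plan is to follow the Cheeger–Kleiner strategy, with the Heisenberg group replaced by the nilpotent tangent cone of the sub-Riemannian manifold $M$; this cone is non-abelian precisely when the generic degree of non-holonomy is at least $2$, which is what drives the argument. Suppose, for contradiction, that $f\colon M\to V$ is an $L$-bi-Lipschitz embedding into a Banach space $V$ with the Radon–Nikodym property. The degree of non-holonomy is constant on an open dense set of equiregular points, so I fix such a point $p$ at which the step is at least $2$. By Mitchell's theorem the metric tangent cone of $(M,d)$ at $p$ is a Carnot group $G_p$ whose Lie algebra admits a non-trivial stratification $\mathfrak{g}=V_1\oplus V_2\oplus\cdots\oplus V_r$ with $r\ge 2$; in particular $G_p$ is non-abelian, and every layer $V_j$ with $j\ge 2$ lies in the commutator subalgebra $[\mathfrak{g},\mathfrak{g}]$.

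Next I would differentiate $f$ at $p$. The Radon–Nikodym property is exactly the hypothesis under which Lipschitz maps into $V$ admit derivatives almost everywhere, and in the sub-Riemannian setting this takes the form of a Pansu–Rademacher theorem: expressing $f$ in privileged coordinates at $p$ and applying the anisotropic dilations $\delta_\lambda$ of $G_p$, the rescaled difference quotients converge, as the scale tends to $0$, to a homomorphism of Carnot groups $Df_p\colon G_p\to V$ respecting the gradings. Because $f$ is $L$-bi-Lipschitz and the rescalings preserve this bound, $Df_p$ is itself $L$-bi-Lipschitz, hence injective. Establishing this differentiability statement for a general equiregular tangent cone with a Banach-space target is the step I expect to be the main obstacle, since it requires transplanting Pansu's Rademacher theorem and the Cheeger–Kleiner differentiation scheme from $\mathbb{H}_n$ to the arbitrary Carnot group $G_p$.

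Finally I would extract the contradiction from the group structure. Viewing $V$ as an abelian group under addition, any group homomorphism $G_p\to V$ annihilates the commutator subgroup $[G_p,G_p]$. Consequently $Df_p$ vanishes on every layer $V_j$ with $j\ge 2$ and collapses the non-trivial subgroup $\exp(V_2\oplus\cdots\oplus V_r)$ to the identity, contradicting the lower bi-Lipschitz bound $\tfrac{1}{L}\,d_p(x,y)\le\|Df_p(x)-Df_p(y)\|$, which forces $Df_p$ to separate distinct points. This contradiction shows that no such embedding exists, completing the proof.
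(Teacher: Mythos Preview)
Your overall mechanism is the same as the paper's---pass to the non-abelian Carnot tangent cone $G_p$ at an equiregular point and exploit that $V$ is abelian---but the paper sidesteps precisely the obstacle you flag. Rather than invoking a Pansu--Rademacher theorem on $M$ (which, for a general equiregular sub-Riemannian domain and an RNP Banach target, is not a result one can simply cite), the paper decouples the two steps. First, a soft Gromov--Hausdorff compactness argument (Theorem~\ref{thmcm}, Corollary~\ref{corotc}) passes from the $L$-bi-Lipschitz map $f\colon M\to V$ to an $L$-bi-Lipschitz map $G_p\to V$: the rescalings of $f$ about $p$ are uniformly $L$-bi-Lipschitz, the domains converge to $G_p$ by Mitchell's theorem, the Banach space is its own tangent cone, and an Arzel\`a--Ascoli style diagonal extraction yields the limit. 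This step uses neither differentiation nor the RNP, and the limit map is \emph{not} claimed to be a group homomorphism. Second, non-embeddability of Carnot groups into $V$ is proved directly (Theorem~\ref{thmcn}) without the full Pansu theorem: RNP together with Fubini gives a.e.\ existence of the \emph{horizontal} directional derivatives $X_i f$ only, a Baire-category argument (Theorem~\ref{thmfc}, Lemma~\ref{lemmads}) locates a dense set of points where all $X_i f$ are simultaneously continuous, and then the commutator identity $\exp(t^2[X_i,X_j])=\exp(-tX_i)\exp(-tX_j)\exp(tX_i)\exp(tX_j)$ combined with a telescoping estimate (Lemma~\ref{lemma bs}) forces $\|f(x')-f(x)\|\le C\varepsilon\,d(x',x)$ along second-layer directions, contradicting the lower bi-Lipschitz bound. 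So your route is correct in spirit and would succeed given the differentiation theorem you describe, but it demands a stronger tool than necessary; the paper's two-stage argument replaces it with a GH-limit reduction plus one-dimensional horizontal differentiation and Baire category, which is exactly how the obstacle you anticipated is avoided.
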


The following corollary is a byproduct of the Theorem \ref{thm 1.3}. Its proof will be given in \S \ref{sect gh}.

\begin{corollary}\label{coroesrnercd}
Any complete regular sub-Riemannian manifold can not be bi-Lipschitzly embedded into a metric measure space $(M,d,m)$ satisfying $RCD(K,N)$, with $K,N\in\mathbf{R}$ and $N>1$, where $(M,d)$ is a length space and $supp(m)= M$.
\end{corollary}

By direct calculations, Juillet \cite{nj} showed that no
curvature-dimension bound $CD(K,N)$ holds in any Heisenberg group
$\mathbb{H}_n$ with its Carnot-Carath\'eodory distance and Lebesgue
measure. The Heisenberg group with Carnot-Carath\'eodory metric is a
basic example of sub-Riemannian manifold (see \S \ref{sect sc}). Our proof of Theorem \ref{thm1.2} is based on
a blow-up argument and by contradiction: if a sub-Riemannian manifold satisfies $CD(K,N)$ (see Def. \ref{defcd}), then it would be a $RCD(K,N)$ space (see Def. \ref{defrcd}) and thus by structure theory of $RCD(K,N)$ spaces the tangent cones would be Euclidean. On the other hand the tangent cones to sub-Riemannian manifolds are Carnot groups, and the contradiction is due to our Theorem \ref{thm 1.3}. The Gromov-Hausdorff convergence theory for $L$-biLipschitz maps between locally compact length spaces plays an essential role in this argument. Concerning Theorem \ref{thm 1.3}, for the sub-Riemannian manifold whose generic degree of non-holonomy is $1$, it may have a bi-Lipschitz embedding into some Banach space with the Radon-Nikodym property. In fact, in a preprint \cite{js}, Seo proved that the Grushin plane can be bi-Lipschitzly embedded in some Euclidean space.

The paper is organized as follows.  In \S \ref{sect sc},  we review the nilpotent Lie groups and sub-Riemannian geometry. We prove Theorem \ref{thm 1.3} in \S \ref{sect cg} for the special case that the sub-Riemannian manifold is a Carnot group (Theorem \ref{thmcn}) to which the general case will be reduced to in \S \ref{sect gh} by Gromov-Hausdorff convergence theory for $L$-biLipschitz maps between locally compact length spaces. The section \S \ref{sect cd} is a review of some basic properties of curvature-dimension condition for later
use.
Finally, in \S \ref{sect cs}, we recall the definitions of
Cheeger energy and Sobolev spaces on metric measure spaces and then
prove Theorem 1.3, Theorem \ref{thm1.2} and Theorem \ref{thm1.1}.

All the main results of the paper were announced in \cite{hs}. In fact, Theorem \ref{thm1.2} and Theorem \ref{thm 1.3} were derived and firstly announced at a seminar run by Professor Fuquan Fang in 2014. While we submitted \cite{hs} to the arXiv, we also noticed that Ambrosio and Stefani claimed a similar result (Proposition 3.6 in \cite{ag}) as our Theorem \ref{thm1.2} using different method of proof.

\section{Nilpotent Lie groups and Sub-Riemannian manifolds}\label{sect sc}

In this section, we recall some basic definitions of nilpotent Lie
groups and sub-Riemannian geometry. For a detailed discussions of these
topics, please refer to \cite{aa}.

\begin{definition}[Nilpotent Lie algebra]\label{defnla}
Let $(\mathfrak{g},[\cdot,\cdot])$ be a Lie algebra over
$\mathbb{R}$. We define the lower central sequence
$C^k(\mathfrak{g})$ as follows: for any natural number $k$,
\begin{equation}
\begin{split}
C^0(\mathfrak{g})&:= \mathfrak{g},\\
C^{k+1}(\mathfrak{g})&:=[\mathfrak{g},C^k(\mathfrak{g})].
\end{split}
\end{equation}
We say $\mathfrak{g}$ is a nilpotent Lie algebra, if
 $C^k(\mathfrak{g})=0$ for some $k$.
\end{definition}

Nilpotent Lie groups are defined via their Lie algebras.

\begin{definition}[Nilpotent Lie group]\label{defnlg}
Let $G$ be a connected Lie group with Lie algebra $\mathfrak{g}$.
Then $G$ is called nilpotent if its Lie algebra $\mathfrak{g}$ is
nilpotent.
\end{definition}

Carnot groups are a special class of nilpotent Lie groups
which play a central role in this paper.

\begin{definition}[Carnot group]\label{defcg}
A Carnot group is a simply connected
Lie group $G$ with a distinguished vector space $V_1$ such that the
Lie algebra $\mathfrak{g}$ of the group has the direct sum decomposition

\begin{equation}
\begin{split}
  \mathfrak{g} = \sum\limits_{k=1}^{m}V_k,\qquad  V_{k+1}=[V_1,V_k].
\end{split}
\end{equation}
The number $m$ is called the step of the group.
\end{definition}

For more motivations and backgrounds on the Carnot groups, please
refer to Pansu \cite{ppc,pp} and Mitchell (\cite{jm}).

\begin{remark}\label{remarkcgnab}
In the following, all Carnot groups that we will use are {\it nonabelian}.
\end{remark}

\begin{example}[Heisenberg group]\label{exmhg}
The Heisenberg group $\mathbb{H}$ is the group of  $3\times3$ upper
triangular matrices of the form
\[\begin{matrix}
\begin{bmatrix} 1 & x & z \\ 0 & 1 & y \\ 0 & 0 & 1
\end{bmatrix}
\end{matrix}\]
under the operation of matrix multiplication.
The Lie algebra $\mathfrak{h}$ of $\mathbb{H}$ is generated by
\[
\begin{matrix}
X=\begin{bmatrix} 0 & 1 & 0 \\ 0 & 0 & 0 \\ 0 & 0 & 0
\end{bmatrix}, \,\,\,
Y=\begin{bmatrix} 0 & 0 & 0 \\ 0 & 0 & 1 \\ 0 & 0 & 0
\end{bmatrix},\,\,\,
Z=\begin{bmatrix} 0 & 0 & 1 \\ 0 & 0 & 0 \\ 0 & 0 & 0
\end{bmatrix},\end{matrix}\] with the Lie bracket the usual commutator of matrices.

The Heisenberg group is a Carnot group. In fact, the generators of the Lie algebra satisfy the unique nontrivial relation $[X,Y]=Z$. Thus the Lie
algebra $\mathfrak{h}$ has a direct sum decomposition
\begin{equation*}
\mathfrak{h}= V_1\oplus V_2, \text{where $V_1=\textrm{span}\{X,Y\}$,
$V_2=\textrm{span}\{Z\}$}.
\end{equation*}

\end{example}

\begin{definition}\label{defsm}
A sub-Riemannian manifold is a triple $(M,\Delta,g)$ such that $M$
is a smooth manifold, $\Delta$ is a bracket-generating distribution
and $g$ is an inner product defined on $\Delta$. The metric g is
also called a sub-Riemannian metric of manifold $M$.
\end{definition}

Recall that a distribution $\Delta$ is said to be a bracket-generating
distribution if for any point $p \in M $, $ \exists ~~k= k(p)\in
\textbf{N} $ such that
\begin{eqnarray*}
\Delta^1&=&\Delta, \\
 \Delta^2 &=& \Delta^1+[\Delta,\Delta], \\
 \Delta^3 &=& \Delta^2+[\Delta,\Delta^2],\\
\Delta^4 &=& \Delta^3+[\Delta,\Delta^3],\\
 &\cdots& \\
 \Delta^k &=& \Delta^{k-1}+[\Delta,\Delta^{k-1}],\\
\Delta^k_p &=& T_pM.
\end{eqnarray*}

\begin{remark}\label{remarknhd}
The smallest integer $k=k(p)$ such that $\Delta_{p}^{k(p)}= T_{p}M$
is called the degree of non-holonomy  at $p$.
\end{remark}

\begin{definition}\label{defrp}
On a sub-Riemannian manifold $(M,\Delta,g)$, we say that a point
$p\in M$ is a regular point if $\dim\Delta^i$ remains constant in
some neighborhood of $p$ for $0\le i \le k(p)$. Otherwise we call
$p$ a singular point.
\end{definition}

\begin{remark}\label{remarkrpd}
The regular point is defined by open condition. So the set of regular points is open and dense in $M$. For more details, the reader is referred to \cite{ab} (the arguments in the last paragraph of p.$31$). If all points
are regular, we call the sub-Riemannian manifold is regular.
\end{remark}

Here are some well known examples of sub-Riemannian manifolds.

\begin{example}

By Example \ref{exmhg}, the Heisenberg group $\mathbb{H}$ with the
distribution $\Delta$ generated by $X$ and $Y$, and Euclidean metric given
by $\langle X,Y\rangle=0$, $\langle X,X\rangle=1$, $\langle
Y,Y\rangle=1$ is a sub-Riemannian manifold.
\end{example}

\begin{example}[Hopf fiberation]
The special unitary group $SU(2)$ is the Lie group consisting of $2\times2$ unitary
matrices of determinant $1$. Its Lie algebra $\mathfrak{su(2)}$ consists of skew
Hermitian matrices of trace zero. The distribution $\Delta$
generated by left invariant vector fields of the following two
elements in $su(2)$:
\[\begin{matrix}
v_1=\begin{bmatrix} 0 & 1/2\\ -1/2 & 0
\end{bmatrix}, \,\,\,\,&
v_2=\begin{bmatrix} 0 & i/2 \\ i/2 & 0
\end{bmatrix},
\end{matrix}\]
is a bracket-generating distribution. The standard sub-Riemannian
metric $g_0$ on $\Delta$ is given by $\langle
v_i,v_j\rangle=\delta_{ij}$, where $i,j=1,2$. We denote the
sub-Riemannian manifold by $(SU(2),\Delta,g_0)$. The one parameter
subgroup generated by the left invariant vector field
\[\begin{matrix}
v_3=\begin{bmatrix} i/2 & 0 \\ 0 & -i/2
\end{bmatrix}
\end{matrix}\]
gives a (right) $S^1$-action on $SU(2)$. The quotient of $SU(2)$ by
the $S^1$-action is the standard $2$-sphere $S^2$, which is the
Hopf fiberation.
\end{example}

\begin{example}[Special linear group]

The special linear group $SL(2, \textbf{R})$ consists of $2\times2$  matrices of
determinants $1$.
The distribution $\Delta$ is defined by
\[\begin{matrix}
X=\begin{bmatrix} 1/2 & 0 \\ 0 & -1/2
\end{bmatrix},&
Y=\begin{bmatrix} 0 & 1/2 \\ 1/2 & 0
\end{bmatrix}
\end{matrix}.\]
And the standard sub-Riemannian metric on $\Delta$ is given by $\langle
X,Y\rangle=0$, $\langle X,Y\rangle=1$, $\langle X,Y\rangle=1$.
\end{example}

\begin{example}[The Grushin plane]

The Grushin plane is the plane $\textbf{R}^2$ with the horizontal
distribution spanned by two vector fields
 \begin{displaymath}
 X_1 =\frac{\partial}{\partial x} ~~ \textrm{and} ~~ X_2 = x\frac{\partial}{\partial y}.
 \end{displaymath}
The vector fields $X_1$ and $X_2$ form an orthonormal basis for the
tangent space at each point in
$\textbf{R}^2\backslash\{(x,y):x=0\}$.
\end{example}

\begin{definition}\label{defhc}
An absolutely continuous curve ${\gamma:[a,b]\rightarrow M }$ on
${(M,\Delta,g)}$ is called horizontal if it is almost everywhere
tangent to the distribution $\Delta$.
\end{definition}

Be aware that here the notion of absolute continuity is the usual one on differential manifolds. Later in \S $6$ (\ref{L1bound}), we will define absolutely continuous curves in metric measure space.

Using the sub-Riemannian metric, one can introduce the notion of
length for horizontal curves.

\begin{definition}
 Let ${\gamma:[a,b]\rightarrow M}$ be a horizontal curve in
 ${(M,\Delta,g)}$.
The length ${L(\gamma)}$ of $\gamma$ is defined by $
{\int_a^b\sqrt{g(\dot{\gamma}(t),\dot{\gamma}(t)})dt}$.
 The sub-Riemannian or Carnot-Carath\'eodory distance ${d(x,y)}$
 between two points ${x}$ and ${y}$ on ${(M,\Delta,g)}$ is defined to be ${d(x,y)=\inf\{L(\gamma)\}}$,
 where the infimum is taken over all horizontal curves $\gamma$
 connecting ${x}$ and ${y}$. \end{definition}

The notion of distance is well-defined due to the following Theorem
\ref{thmCR}.

\begin{theorem}(Chow-Rashevskii, see also \cite{aa})\label{thmCR}
 Suppose that ${M}$ is a connected smooth manifold and $\Delta$ is a bracket-generating distribution on ${M}$. Then
\begin{enumerate}
\item for any two points on ${M}$, there is a piecewise smooth horizontal curve
connecting them;
\item the Carnot-Carath\'eodory distance d is finite and continuous on
${M}$;
\item the Carnot-Carath\'eodory distance d  induces the topology of the
manifold.
\end{enumerate}
\end{theorem}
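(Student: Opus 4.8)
The plan is to prove the three assertions in sequence, with assertion (1) (horizontal connectivity) as the crux from which (2) and (3) follow by comparison with an auxiliary Riemannian metric. Throughout I fix a point $p\in M$ and a local orthonormal frame $X_1,\dots,X_r$ of $\Delta$ near $p$; the horizontal curves at my disposal are exactly the concatenations of integral-curve segments of these fields and their linear combinations.

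First I would establish the \emph{commutator-of-flows} principle, the device that converts Lie brackets into admissible motions. Writing $\phi^X_t$ for the time-$t$ flow of a vector field $X$, the composite $\psi_t := \phi^{X_j}_{-t}\circ\phi^{X_i}_{-t}\circ\phi^{X_j}_t\circ\phi^{X_i}_t$ satisfies $\psi_t(q)=q+t^2[X_i,X_j](q)+O(t^3)$, so that the reparametrized path $\tau\mapsto\psi_{\sqrt{\tau}}(q)$ is $C^1$ with velocity $[X_i,X_j](q)$ at $\tau=0$. Since each factor $\phi^{X_\bullet}_t$ is horizontal, $\psi_t$ is a horizontal concatenation, and by iterating this construction I can realize, through horizontal curves, motion in the direction of any iterated bracket $[X_{i_1},[X_{i_2},\dots]]$. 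By the bracket-generating hypothesis the iterated brackets of $X_1,\dots,X_r$ evaluated at $p$ span $\Delta^k_p=T_pM$, so I may select $n=\dim M$ such horizontally realizable vector fields $Y_1,\dots,Y_n$ with $Y_1(p),\dots,Y_n(p)$ a basis of $T_pM$.

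Next I would assemble the local endpoint map $\Phi(t_1,\dots,t_n)$ obtained by running, in succession and starting at $p$, the horizontal concatenation realizing $Y_1$ for ``time'' $t_1$, then $Y_2$ for $t_2$, and so on. By the previous paragraph each factor is $C^1$ in its time parameter with $\partial_{t_i}\Phi|_0=Y_i(p)$, so the differential $d\Phi_0=[\,Y_1(p)\,|\cdots|\,Y_n(p)\,]$ is invertible; the inverse function theorem then makes $\Phi$ a homeomorphism from a neighborhood of the origin onto a manifold-neighborhood of $p$. As every point $\Phi(t)$ is joined to $p$ by a piecewise smooth horizontal curve, the reachable set $\mathcal{R}(p)$ of points so joined to $p$ contains a neighborhood of $p$. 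Running the same argument at each point of $\mathcal{R}(p)$ shows that $\mathcal{R}(p)$ is open; since reversing and concatenating horizontal curves makes reachability an equivalence relation, the orbits partition $M$ into open sets, and connectedness of $M$ forces $\mathcal{R}(p)=M$. This proves (1), and since a piecewise smooth curve on a compact interval has finite length, it also yields $d(x,y)<\infty$ in (2).

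Finally, for the topological statements (2) and (3) I would compare $d$ with the distance $d_h$ of an auxiliary Riemannian metric $h$ on $M$ for which $X_1,\dots,X_r$ are orthonormal on $\Delta$. Since any horizontal curve is admissible for $h$ and its sub-Riemannian length equals its $h$-length, one has $d_h(x,y)\le d(x,y)$; thus the identity $(M,d)\to(M,d_h)$ is $1$-Lipschitz and every $d$-ball sits inside an $h$-ball, giving one inclusion of topologies. For the reverse inclusion I would use the chart $\Phi$ above: the flow times building $\Phi(t)$ (including the $\sqrt{\,\cdot\,}$ appearing in the commutator factors) tend to $0$ as $t\to0$, so $d(p,\Phi(t))\to0$, and since $\Phi$ is a homeomorphism onto a manifold-neighborhood this shows manifold-convergence to $p$ implies $d$-convergence to $p$. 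The two topologies therefore coincide, whence $d$ is continuous and induces the manifold topology. The main obstacle is the first paragraph: making the commutator-of-flows estimate and its $C^1$ reparametrization precise, and controlling the regularity while iterating to reach higher-order brackets, since the naive square-root substitution only produces $C^1$ paths and one must verify that this suffices for the rank computation.
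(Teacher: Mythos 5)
The paper itself contains no proof of this theorem: it is quoted as a classical result with a pointer to Agrachev--Barilari--Boscain (\cite{aa}), so there is no internal argument to compare against, and your proposal has to be judged on its own. Your route is the classical Chow commutator-of-flows proof, and its architecture is sound: realize brackets by commutators of flows, assemble an endpoint map $\Phi$, show the reachable set of each point is open, conclude (1) by connectedness, and then obtain (2) and (3) by the two-sided comparison (the inequality $d_h\le d$ from an auxiliary Riemannian metric in one direction, the length bound $d(p,\Phi(t))\to0$ on the chart in the other). Those last two steps are correct as written once (1) and the chart exist. For distributions of step $2$ the first paragraph can also be made rigorous exactly as you sketch: $F(s,q)=\phi^{X_j}_{-s}\circ\phi^{X_i}_{-s}\circ\phi^{X_j}_{s}\circ\phi^{X_i}_{s}(q)$ is smooth with $\partial_sF(0,q)=0$ and $\tfrac12\partial_s^2F(0,q)=[X_i,X_j](q)$, so $\Psi^\tau(q)=F(\sqrt\tau,q)$ has $\partial_\tau\Psi^\tau(q)=[X_i,X_j](q)+O(\sqrt\tau)$ with error uniform on compacta, which is what the rank computation needs.

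The genuine gap is the clause ``by iterating this construction I can realize \dots any iterated bracket,'' which is precisely where the theorem stops being easy, and your closing sentence flags it without filling it. For a bracket of order $\ge3$ you must commute a genuine flow with a map $\psi_t$ that is \emph{not} a flow (it is a composition of flows reparametrized by roots of $t$, and $\psi_t^{-1}\ne\psi_{-t}$); the expansion you quoted does not apply to such a pair, and proving the analogous asymptotics $\psi_t^{-1}\circ\phi_{-t}\circ\psi_t\circ\phi_t=\mathrm{id}+t^{a+b}[\,\cdot\,,\cdot\,]+o(t^{a+b})$ with error control uniform in the base point is the real technical content of this approach. A second, related defect: the inverse function theorem requires $\Phi$ to be $C^1$ \emph{jointly} near the origin, whereas you only exhibit separate partial derivatives at $0$; this can be repaired either by checking joint continuity of the partials (which again needs the uniform expansions) or, more robustly, by replacing the inverse function theorem with a Brouwer-degree argument, which only needs $\Phi$ continuous near $0$ and differentiable at $0$ with invertible differential. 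It is worth knowing that the source the paper cites proves Chow--Rashevskii by a different argument designed to sidestep all of this: an induction that builds immersed submanifolds of increasing dimension as images of compositions of genuine flows $(t_1,\dots,t_k)\mapsto e^{t_kf_k}\circ\cdots\circ e^{t_1f_1}(p)$, where smoothness in the parameters is automatic, no bracket is ever realized by a curve, and a symmetry argument places $p$ in the interior of its own reachable set. To complete your version you should either restrict to step $2$, carry out the iterated-commutator estimates in full (Gromov's and Montgomery's treatments show what that takes), or switch to the flow-composition induction of the cited book.
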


\begin{definition}[Horizontal gradient]\label{defhg}
Let $\{X_1,\cdots,X_m\}$ be an orthogonal frame of $\Delta$ in a
sub-Riemannian manifold $(M,\Delta,g)$. For a Lipschitz function $f$ with respect to the sub-Riemannian
distance, its horizontal gradient is defined to be
\begin{displaymath}\nabla_Hf~=~X_1(f)\cdot X_1 + \cdots + X_m(f)\cdot
X_m~.
\end{displaymath}

\end{definition}

The corresponding horizontal Sobolev space $W_{H}^{1,2}(M)$ with
norm
\begin{equation}
\|f\|_H =\left(\int_M {f^2 + \langle{\nabla_Hf,\nabla_Hf}\rangle dV}\right)^\frac{1}{2}
\end{equation}
 is a Hilbert space, where $dV$ is a fixed Riemannian volume.

\begin{definition}[Radon-Nikodym property]\label{defrnp}
A Banach space V is said to have Radon-Nikodym property, if every
Lipschitz function  $f: \textbf{R}^k\rightarrow V$ is differentiable
almost everywhere.
\end{definition}

According to an early result of Gelfand, separable dual spaces and
reflexive spaces have the Radon-Nikodym property.

Finally, we recall the definition of Baire set and Baire category
theorem, which will be used in \S \ref{sect cg}.

\begin{definition}[First category set and second category set]
A set $A$ in a topological space $S$ is called nowhere dense if
 for every nonempty open set $U\subset S$, there is a
nonempty open set $V\subset U$ with $A\cap V = \emptyset$. A union
of countably many nowhere dense sets is called a set of first
category. Sets which are not of first category are said to be of
second category.
\end{definition}

\begin{definition}\label{defbp}
A subset of any topological space is said to have the property of Baire if it can be represented in the form $G\Delta A = (G-A)\bigcup(A-G)$, where $G$ is an open set and $A$ is a first category set.
\end{definition}

\begin{definition}\label{defbpf}
A map $f$ between two topological spaces is said to have the property of
Baire if $f^{-1}(V)$ has the property of Baire for every open set $V$.
\end{definition}

\begin{theorem}[Baire category theorem]\label{thmbct}
Let $(S,d)$ be any complete metric space. Suppose $N$ is a first
category set of $S$, then $S\backslash N$ is dense in $S$.
\end{theorem}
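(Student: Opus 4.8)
The plan is to reduce the statement to showing that every nonempty open set $U \subseteq S$ meets $S \setminus N$, since a subset of $S$ is dense precisely when it intersects every nonempty open set. Write $N = \bigcup_{n=1}^{\infty} A_n$ with each $A_n$ nowhere dense. I would first replace each $A_n$ by its closure $\overline{A_n}$: a closed set is nowhere dense exactly when it has empty interior, and enlarging the $A_n$ only shrinks $S \setminus N$, so it suffices to produce a point of $U$ avoiding all the $\overline{A_n}$. This turns the problem into finding a point that lies in a countable intersection of open dense sets $S \setminus \overline{A_n}$ within the given open set $U$.

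The core of the argument is an inductive construction of a nested sequence of closed balls, exploiting completeness. Because $\overline{A_1}$ has empty interior, the open set $U \setminus \overline{A_1}$ is nonempty; choose $x_1$ and a radius $r_1 \in (0,1)$ with the closed ball $\overline{B}(x_1, r_1) \subseteq U \setminus \overline{A_1}$. Inductively, given $\overline{B}(x_n, r_n)$ disjoint from $\overline{A_1}, \dots, \overline{A_n}$, the open ball $B(x_n, r_n)$ is nonempty and $\overline{A_{n+1}}$ has empty interior, so $B(x_n, r_n) \setminus \overline{A_{n+1}}$ is nonempty and open; inside it I inscribe a closed ball $\overline{B}(x_{n+1}, r_{n+1})$ with $r_{n+1} < r_n/2$ and $r_{n+1} < 1/(n+1)$. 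By construction the balls are nested and their radii tend to $0$.

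Completeness then supplies the limiting point. Since $\overline{B}(x_m, r_m) \subseteq \overline{B}(x_n, r_n)$ for $m \ge n$, the centers satisfy $d(x_m, x_n) \le r_n \to 0$, so $(x_n)$ is Cauchy and converges to some $x \in S$. For each fixed $n$ all centers $x_m$ with $m \ge n$ lie in the closed set $\overline{B}(x_n, r_n)$, hence so does the limit $x$. In particular $x \in \overline{B}(x_1, r_1) \subseteq U$, and for every $n$ the point $x$ lies in $\overline{B}(x_n, r_n)$, which is disjoint from $\overline{A_n}$, so $x \notin A_n$. Thus $x \in U \cap (S \setminus N)$, and as $U$ was an arbitrary nonempty open set, $S \setminus N$ is dense. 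The only delicate point is guaranteeing at each stage that the relevant open set is genuinely nonempty so that a closed ball can be inscribed; this is exactly where nowhere-density (the empty interior of the closures) is used, while completeness is what converts the formal nested intersection into an actual point. There is no serious obstacle beyond the bookkeeping of keeping the balls nested and successively disjoint from the $\overline{A_n}$.
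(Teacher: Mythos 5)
Your proof is correct and complete. Note, however, that the paper itself offers no proof of this statement at all: it records the Baire category theorem as a classical fact (used later in \S 3 via Theorem \ref{thmfc}) and simply cites it. So there is nothing in the paper to compare against; what you have done is supply the standard textbook argument, and you have done so correctly. The reduction to meeting an arbitrary nonempty open set $U$, the passage from the $A_n$ to their closures (which is legitimate under the paper's definition of nowhere dense: if $V$ is open and disjoint from $A_n$, it is automatically disjoint from $\overline{A_n}$, and a closed set is nowhere dense precisely when its interior is empty), the inductive construction of nested closed balls $\overline{B}(x_{n+1},r_{n+1})\subseteq B(x_n,r_n)\setminus\overline{A_{n+1}}$ with $r_n\to 0$, and the use of completeness to extract the limit point $x\in U\setminus\bigcup_n \overline{A_n}\subseteq U\cap(S\setminus N)$ are exactly the ingredients of the classical proof, and each step is justified where it needs to be (nonemptiness of $B(x_n,r_n)\setminus\overline{A_{n+1}}$ from empty interior, membership of $x$ in each closed ball from closedness). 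No gaps.
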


For more details about Baire category theory, one can see \cite{jo}.

\section{Embeddings of the Carnot groups}\label{sect cg}

In this section, all the Carnot groups are assumed to be {\it nonabelian} as said in Remark \ref{remarkcgnab}. Our main goal is to prove Theorem \ref{thmcn} which is one of the
main ingredients of the proofs of our main results.

\begin{theorem}\label{thmfc}
Let $g$ be a Borel map from a complete metric space $(M_1,d_1)$ to a
separable metric space $(M_2,d_2)$. Then there is a first category
set $F\subseteq M_{1}$ such that $g$ is continuous on $M_1\setminus
F$ (dense in $M_1$).
\end{theorem}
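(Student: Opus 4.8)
The plan is to exploit the only two structural facts available: that $M_2$ is separable (hence second countable) and that preimages of open sets under $g$ are Borel. Fix a countable base $\{U_n\}_{n\ge 1}$ for the topology of $M_2$. Since continuity of the restriction $g|_{M_1\setminus F}$ is equivalent to the openness of $(g|_{M_1\setminus F})^{-1}(U_n)$ in the subspace topology for every $n$, it suffices to arrange that, off a single first category set $F$, each preimage $g^{-1}(U_n)$ coincides with an open set. This reduces the whole statement to an approximation of the Borel sets $g^{-1}(U_n)$ by open sets modulo a meager error.

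The key technical ingredient I would establish is that every Borel subset $A$ of $M_1$ has the \emph{Baire property}: there is an open set $O$ with $A\,\triangle\,O$ first category. I would prove this by showing that the family $\mathcal B$ of sets with the Baire property is a $\sigma$-algebra containing the open sets; since $\mathcal B$ then contains the Borel $\sigma$-algebra, each $g^{-1}(U_n)\in\mathcal B$. Open sets trivially lie in $\mathcal B$, and closure under countable unions is immediate because $\bigcup_n A_n \,\triangle\, \bigcup_n O_n \subseteq \bigcup_n (A_n\triangle O_n)$. I expect closure under complements to be the main obstacle: if $A\triangle O$ is meager with $O$ open, one must show $A^c$ again has the Baire property. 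The point is that the boundary $\partial O=\overline O\setminus O$ is closed with empty interior, hence nowhere dense, so $O^c$ differs from the open set $\mathrm{int}(O^c)$ only by the nowhere dense set $\partial O$; combining this with $A^c\triangle O^c=A\triangle O$ gives the claim. This nowhere-density of boundaries of open sets is the one place the argument genuinely uses the topology rather than formal set algebra.

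With the lemma in hand the proof closes quickly. For each $n$ choose an open $O_n$ with $P_n:=g^{-1}(U_n)\,\triangle\,O_n$ first category, and set $F:=\bigcup_n P_n$, a countable union of first category sets and therefore itself first category. On $M_1\setminus F$ each $g^{-1}(U_n)$ agrees with $O_n$, so $(g|_{M_1\setminus F})^{-1}(U_n)=O_n\cap(M_1\setminus F)$ is relatively open; since the $U_n$ form a base, $g|_{M_1\setminus F}$ is continuous. Finally, $M_1$ is a complete metric space and hence a Baire space, so the Baire category theorem (Theorem \ref{thmbct}) guarantees that $M_1\setminus F$ is dense in $M_1$, completing the argument.
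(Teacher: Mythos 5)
Your proof is correct and follows essentially the same route as the paper's: fix a countable base $\{U_n\}$ of the separable space $M_2$, write each Borel set $g^{-1}(U_n)$ as an open set modulo a first category set via the Baire property, let $F$ be the union of the meager errors so that each restricted preimage becomes relatively open, and invoke the Baire category theorem (Theorem \ref{thmbct}) for density of $M_1\setminus F$. The only difference is that you supply the standard $\sigma$-algebra argument showing every Borel set has the Baire property (the complement step via nowhere-density of $\partial O$), which the paper simply asserts without proof.
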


\begin{proof}
Take a countable topological basis $\{U_1,U_2,\cdots ,U_n,\cdots\}$
of $M_2$. Because $g$ is a Borel map, it has property of Baire (see Def. \ref{defbpf}). Then
\begin{eqnarray}
 g^{-1}(U_i) & = & {G_i\bigtriangleup F_i} \nonumber \\
    &=& (G_i\bigcup F_i)\setminus(G_i\bigcap F_i)\nonumber,
 \end{eqnarray}
 where $G_i$ are open sets of $M_1$ and $F_i$ are first category sets of $M_1$.
  Let \[ F =\bigcup\limits_{i=1}^{\infty}F_i,\] then $F$ is a first category set
of $M_1$. Set \[ g_1 = g\mid_{M_1\setminus F},\]  then $g_1$ is
continuous on $M_1\setminus F$. In fact,
\begin{eqnarray*}
 g_1^{-1}(U_i) &=& g^{-1}(U_i)\bigcap (M_1\setminus F)\\
  &=& (G_i\bigtriangleup F_i)\bigcap (M_1\setminus F)\\
  &=& G_i\bigcap (M_1\backslash F)
\end{eqnarray*}
i.e., $g_1^{-1}(U_i)$ are open sets of $M_1\backslash F$. By Baire
category Theorem \ref{thmbct}, $M_1\backslash F$ is dense in $M_1$.
\end{proof}

Let $G$ be a Carnot group with Lie algebra
\begin{equation}
\begin{split}
  \mathfrak{g} = \sum\limits_{k=1}^{s}V_k,\qquad  V_{k+1}=[V_1,V_k]
\end{split}
\end{equation}
where the dimension of $V_k$ = $m_k-m_{k-1}$, $2\le k \le s$ and the
dimension of $V_1$ = $m_1$.

We can construct a basis $\{X_1,X_2\cdots X_n\}$ for $\mathfrak{g}$
with respect to the above decomposition, where $n = \dim G$.
Firstly, fix a basis $\{X_1, . . . , X_{m_1}\}$ of $ V_1$, then
consider all brackets $[X_i, X_j ]$ for  $i, j = 1, \dots , m_1 $.
Since $[V_1, V_1] = V_2$, we can find among such brackets a basis
for $V_2$. Pick such a basis and denote by $\{X_{m_1+1}, \dots,
X_{m_2}\}$. Similarly, extract a basis $\{X_{m_2+1}, \dots ,
X_{m_3}\}$ for $V_{m_3}$ from the set of $[X_i, X_j ]$, for $i = 1,
\dots, m_1$ and $j = m_1 + 1, \dots, m_2$. And so on. In such a way,
we have constructed a basis $\{X_1, . . . ,X_n\}$ of $\mathfrak{g}$
such that
\begin{enumerate}
\item For any $1\le j \le s$, $\{X_{{m_{j-1}}+1}, . . . ,X_{m_j}\}$ is a basis of $V_j$.
\item For any $i = m_1 + 1,\dots, n $, there are integers $d_i , l_i , k_i$, such that
$X_i \in V_{d_i}$ , $X_{l_i} \in V_1$, $X_{k_i} \in V_{d_{i}-1}$, and
$ X_i = [X_{l_i} , X_{k_i}]$.
\end{enumerate}

We can now define a sub-Riemannian structure on $G$ with respect to
the Lie algebra decomposition given above. Let $\Delta$ be the
distribution on $G$ which is generated by the left invariant vector
fields, still denoted by $X_1,\dots ,X_{m_1}\in V_1$, where
$m_1=\dim V_1$. We define a left invariant sub-Riemannian metric on
$M$ by
\begin{equation}
 \langle X_i,X_j\rangle = \delta_{i,j},\,\,\,\,\, 0\le i,j \le m_1. \nonumber
\end{equation}
Then $(G,\Delta,{\langle \cdot,\cdot\rangle})$ is a sub-Riemannian
manifold. We denote the induced Carnot-Carath\'eodory distance
function on $G$ by $d$.

Using the basis $\{X_1,X_2\cdots X_n\}$, we can construct a privileged
coordinate system at $x\in G$ as follows: we define a map \[\Psi:
\textbf{R}^n \to G\] by \[(s_1,s_2,\cdots ,s_n) \mapsto
\exp(s_nX_n)\cdot \exp(s_{n-1}X_{n-1})\cdot \cdots \cdot
\exp(s_1X_1)\cdot x.\] Obviously, $(s_1,s_2,\cdots ,s_n)$ gives a
local coordinate system at $x$.

\begin{theorem}\label{thmpc}
Let $(s_1,s_2,\cdots,s_n)$ be  a privileged coordinate system at $x$ in a Carnot group $G$, constructed as above.
There exist constants $c$ and $e$ such that if $d(x,q)< e$, then
\[\frac{1}{c}\|s\|\le d(x,q)\le c\|s\|,\] where $s=(s_1,s_2,\cdots
,s_n)$ is the coordinate of $q$, and \[\|s\|=
|s_1|^{\frac{1}{w_1}}+|s_2|^{\frac{1}{w_2}}+\cdots
+|s_n|^{\frac{1}{w_n}}\] with $w_i = k$ if $m_{k-1}< i \le m_{k}$
and $m_0 = 0$.
\end{theorem}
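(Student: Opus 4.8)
The plan is to exploit the one-parameter family of anisotropic dilations that a Carnot group carries, reducing the two-sided estimate to a compactness argument on a single ``unit sphere''. First I would use left-invariance of the Carnot--Carath\'eodory distance to reduce to the case $x=e$, the identity: the privileged coordinates at a general $x$ are obtained from those at $e$ by the left translation $L_x$, which is a $d$-isometry, so that both $d(x,\cdot)$ and $\|\cdot\|$ are unchanged under this reduction. Thus I may assume $\Psi(s)=\exp(s_nX_n)\cdots\exp(s_1X_1)$.

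Next I would introduce the dilations. Because $\mathfrak g=\bigoplus_{k=1}^s V_k$ is stratified with $[V_i,V_j]\subseteq V_{i+j}$, the linear maps $\delta_\lambda\colon\mathfrak g\to\mathfrak g$ defined by $\delta_\lambda|_{V_k}=\lambda^k\,\mathrm{id}$ are Lie algebra automorphisms for every $\lambda>0$; since $G$ is simply connected they integrate to group automorphisms, still denoted $\delta_\lambda$, satisfying $\delta_\lambda(\exp X)=\exp(\delta_\lambda X)$. Evaluating on $\Psi$ and recalling $X_i\in V_{w_i}$, one obtains $\delta_\lambda(\Psi(s))=\Psi(\lambda^{w_1}s_1,\dots,\lambda^{w_n}s_n)$, i.e. in privileged coordinates $\delta_\lambda$ acts diagonally by $s_i\mapsto\lambda^{w_i}s_i$. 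The two homogeneity facts I need are then at hand: on the one hand $\|\delta_\lambda s\|=\lambda\|s\|$ directly from the definition of $\|\cdot\|$; on the other hand $d(\delta_\lambda p,\delta_\lambda q)=\lambda\,d(p,q)$, which I would prove by observing that $\delta_\lambda$ carries the horizontal distribution $\Delta=\mathrm{span}(X_1,\dots,X_{m_1})$ to itself and, since the $X_i$ are left-invariant and $\delta_\lambda$ is an automorphism, scales each horizontal vector's $g$-length by exactly $\lambda$; hence it multiplies the length of every horizontal curve by $\lambda$, and therefore the infimum defining $d$ by $\lambda$.

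With both functions $1$-homogeneous under the same dilation, I would finish by compactness. Set $D(s)=d(e,\Psi(s))$; it is continuous by the continuity of $d$ (Theorem \ref{thmCR}) and of $\Psi$, and $D(s)>0$ for $s\neq0$ because $\Psi$ is a global diffeomorphism and $d(e,q)=0$ only for $q=e$. The sphere $\Sigma=\{\,s:\|s\|=1\,\}$ is compact, since $\|s\|=1$ forces $|s_i|\le1$ for each $i$; hence $D$ attains a positive minimum $a$ and a finite maximum $b$ on $\Sigma$. For arbitrary $s\neq0$ I would write $s=\delta_\lambda s'$ with $\lambda=\|s\|$ and $\|s'\|=1$; then $D(s)=\lambda D(s')$ with $D(s')\in[a,b]$, whence $\tfrac1c\|s\|\le d(x,q)\le c\|s\|$ with $c=\max\{b,1/a\}$. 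This in fact holds for every $q$, so the stated local version follows a fortiori (one may take any $e>0$).

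The step I expect to be the main obstacle is the metric homogeneity $d(\delta_\lambda p,\delta_\lambda q)=\lambda\,d(p,q)$: everything else is formal once the dilations are in place, but this identity is where the stratified structure is genuinely used, and it requires care in checking that the automorphism $\delta_\lambda$ rescales the sub-Riemannian length of \emph{every} horizontal curve (not merely tangent vectors at $e$) by the single factor $\lambda$. The cleanest route is to note that $\delta_{\lambda*}X_i=\lambda X_i$ as left-invariant vector fields for $X_i\in V_1$, so that the differential of $\delta_\lambda$ sends the $g$-orthonormal frame of $\Delta$ to $\lambda$ times an orthonormal frame, giving the uniform rescaling of length along any horizontal path directly.
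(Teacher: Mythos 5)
Your argument is correct, and it is worth noting that the paper offers no proof of its own here: it simply defers to the reference \cite{aa}. What you have written is the standard self-contained argument for the Carnot-group case, and every step checks out: the reduction to $x=e$ via left-invariance is legitimate (the coordinate map at $x$ is the left translate of the one at $e$, since the flows of left-invariant fields are right multiplications); the dilations $\delta_\lambda$ are automorphisms because the stratification $V_{k+1}=[V_1,V_k]$ forces $[V_i,V_j]\subseteq V_{i+j}$; the identity $(\delta_\lambda)_*X_i=\lambda X_i$ for $X_i\in V_1$ does give exact $\lambda$-scaling of the length of every horizontal curve, hence of $d$; and positivity of the minimum of $D$ on $\Sigma$ uses exactly the two facts you invoke, namely that $\Psi$ is a global diffeomorphism (coordinates of the second kind on a simply connected nilpotent group) and that $d$ separates points and is continuous (Chow--Rashevskii). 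The difference from the paper's route is one of scope: the cited source proves the general ball--box theorem for arbitrary sub-Riemannian manifolds by nilpotent approximation in privileged coordinates, which is considerably more delicate and yields only a local, point-dependent estimate; your dilation-plus-compactness argument exploits the exact homogeneity available only on a Carnot group, is much shorter, and in fact delivers the stronger global estimate (valid for all $q$, with no restriction $d(x,q)<e$), which is all that is needed here. The only blemishes are inherited from the paper's imprecise notation (whether $m_k$ is $\dim V_k$ or the cumulative dimension, and whether $\exp(sX)\cdot x$ means the flow or literal left multiplication); your reading is the standard and intended one, and your proof is insensitive to the ordering of the factors in $\Psi$ in any case.
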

The proof and the literature about the theorem can be found in \cite{aa}.

With these preparations, we now consider the Lipschitz maps between
a Carnot group and a Banach space $(V,\|.\|)$ with Radon-Nikodym
property. If there is a Lipschitz map
\begin{displaymath} f:G \rightarrow V,
\end{displaymath}  i.e.,
\begin{displaymath}
d(f(x_1),f(x_2))\le L d(x_1,x_2),~\forall x_{1},x_{2}\in G,
\end{displaymath}
then we have the following:
\begin{lemma}\label{lemmabm}
The directional derivatives $X_{i}(f)$ , $1\le i
\le m_1 $ of the Lipschitz map $f: G\rightarrow V$ exist almost everywhere, and they
are Borel maps from $G$ to $V$.
\end{lemma}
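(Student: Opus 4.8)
The plan is to reduce the statement to the one–dimensional differentiation theorem encoded in the Radon–Nikodym property, and then to upgrade almost-everywhere existence \emph{along single integral curves} to almost-everywhere existence \emph{on $G$} by Fubini's theorem, handling the Borel measurability along the way.

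First I would fix $i$ with $1\le i\le m_1$ and study $f$ along the integral curves of the left-invariant field $X_i$, whose flow is right translation $t\mapsto x\exp(tX_i)$. Since $X_i\in V_1$ generates $\Delta$, this curve is horizontal with unit sub-Riemannian speed, and left invariance of the metric gives $d\bigl(x\exp(sX_i),x\exp(tX_i)\bigr)=d\bigl(e,\exp((t-s)X_i)\bigr)\le |t-s|$. Hence for each fixed $x$ the map $g_x(t):=f(x\exp(tX_i))$ is an $L$-Lipschitz map $\mathbb{R}\to V$. Applying the Radon–Nikodym property (Definition \ref{defrnp}) with $k=1$, each $g_x$ is differentiable for almost every $t$; equivalently, $X_i(f)$ exists at almost every point of every integral curve of $X_i$.

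Next I would pass from lines to $G$. Flowing $X_i$ furnishes, near any point, a flow-box chart $(t,u)\in\mathbb{R}\times\mathbb{R}^{n-1}$ in which $X_i=\partial_t$ and the Riemannian volume reads $dV=\rho\,dt\,du$ with $\rho$ smooth and positive, so that Lebesgue-null and $dV$-null sets coincide locally; covering $G$ by countably many such charts it suffices to argue in one. There the previous step says that the set $Z_i$ on which $X_i(f)$ fails to exist meets almost every line $\{u=\mathrm{const}\}$ in a Lebesgue-null set, so \emph{once $Z_i$ is measurable} Fubini's theorem forces $dV(Z_i)=0$. The measurability of $Z_i$, together with the Borel property claimed in the lemma, I would extract from the difference quotients $q(x,r):=\tfrac1r\bigl(f(x\exp(rX_i))-f(x)\bigr)$ for $r\in\mathbb{Q}\setminus\{0\}$: each $q(\cdot,r)$ is continuous in $x$, hence Borel, and since $g_x$ is $L$-Lipschitz and continuous in $t$ it is differentiable at $0$ exactly when $(q(x,r))$ is Cauchy as $r\to0$ through the rationals. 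Writing this condition out as $\forall\eps\,\exists\delta\,\forall r,r'$ (all rational) exhibits the differentiability set $D_i$, and thus $Z_i=G\setminus D_i$, as a Borel subset of $G$. On $D_i$ we have $X_i(f)(x)=\lim_n q(x,\tfrac1n)$, a pointwise limit of continuous maps into the separable subspace $\overline{\mathrm{span}}\,f(G)\subseteq V$, hence Borel; extending $X_i(f)$ by $0$ on the null set $Z_i$ yields a Borel map $G\to V$, as required.

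The main obstacle is the bookkeeping in the middle step: one must genuinely guarantee that $Z_i$ is measurable before invoking Fubini, and that the flow-box coordinates transport Lebesgue-null sets to $dV$-null sets with adequate control. Both are dispatched above, the former by the rational-net Borel description of $D_i$ and the latter by the smooth positivity of the density $\rho$. A secondary point worth verifying is that $f(G)$, as the Lipschitz image of the separable space $G$, spans a separable subspace of $V$; this is what legitimizes treating a pointwise limit of $V$-valued continuous maps as a Borel map.
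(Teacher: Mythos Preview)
Your proposal is correct and follows essentially the same route as the paper: apply the Radon--Nikodym property along integral curves of $X_i$, globalize via Fubini after checking measurability of the non-differentiability set, and obtain the Borel property of $X_i(f)$ as a pointwise limit of continuous difference quotients. The paper's write-up is terser---it displays the non-differentiability set as $\bigcup_k\bigcap_m E_{k,m}$ with each $E_{k,m}$ an \emph{open} set (a union over all real parameters $t_1,t_2$ of open conditions), whereas you achieve the same end with a rational net; and it takes the flow-box/Fubini reduction for granted where you spell out the density argument---but the skeleton of the two arguments is the same.
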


\begin{proof}
By definition, $X_i(f),\,\, 1\le i\le m_1$ exists almost everywhere.\\
Let $ E_i = \{ x\in G \mid X_i(f)\,\, \textrm{does}\,\, \textrm{not}\,\, \textrm{exist}\,\,
\textrm{at}\,\, x \}$, $1\le i\le m_1$. Then
\begin{equation}
E_i=\bigcup\limits_{k=1}^{\infty}\bigcap\limits_{m=1}^{\infty}E_{k,m},\,\,\,\,
1\le i\le m_1,
\end{equation}
where \[ E_{k,m}= \bigcup_{0<|t_1|,|t_2|<\frac{1}{m}}\left\{x\in G\mid
\|\frac{f(\exp(t_1 X_i)\cdot x)- f(x)}{t_1}-\frac{f(\exp(t_2
X_i)\cdot x)- f(x)}{t_2}\| > \frac{1}{k}\right\}.\] It is easy to see that $E_{k,m}$
are open sets, so $E_i$, $1\le i\le m_1$ is Borel set. By the
definition of Radon-Nikodym property and Fubini theorem, we have
$\mu(E_i)= 0$ for the Lebesgue measure $\mu$ on G.\\
Take a sequence $\{t_n\}$ satisfying $t_n > 0$ and
$\lim\limits_{n\to\infty}t_n = 0 $. Let
\begin{equation} F_n(x)= \frac{f(\exp(t_n X_i)\cdot x)-f(x)}{t_n}.
\end{equation}
For any $n$, $F_n$ are continuous functions on $G$, so they are
Borel maps. Because $F_n(x)\to X_i(f)(x)$ as $n \to \infty $, for $
x \in G\backslash E_{i} $, $X_i(f)$ are Borel maps from $G$ to $V$
for $1\le i\le m_1$.
\end{proof}

\begin{lemma}\label{lemmads}
There is a first category set $F$ such that $X_i(f)$ is continuous
on the dense subset $G\backslash F $ of $G$ for $1\le i
\le m_1 $.
\end{lemma}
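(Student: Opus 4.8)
The plan is to obtain Lemma \ref{lemmads} as a direct application of Theorem \ref{thmfc} to each of the finitely many directional derivatives $X_i(f)$, $1\le i\le m_1$. By Lemma \ref{lemmabm} each $X_i(f)$ is already a Borel map, so the entire content of the argument is to put oneself in a situation where Theorem \ref{thmfc} is applicable: its hypotheses require the source to be a \emph{complete} metric space and the target to be a \emph{separable} metric space. Once both are verified, the conclusion follows by feeding each $X_i(f)$ into that theorem and taking a finite union of the resulting exceptional sets.

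First I would record that the source is complete. Equipped with the Carnot--Carath\'eodory distance $d$, the Carnot group $G$ is a length space whose metric induces the manifold topology by Theorem \ref{thmCR}; since $G$ is simply connected nilpotent, left translations act transitively by isometries, so $(G,d)$ is a locally compact homogeneous length space, and any Cauchy sequence eventually lies in a translate of a fixed relatively compact small ball, hence converges. Thus $(G,d)$ is a complete metric space. The hard part will be the target: a general Banach space $V$ with the Radon--Nikodym property need not be separable, so Theorem \ref{thmfc} cannot be applied to $X_i(f)\colon G\to V$ as stated. I would resolve this by passing to a separable subspace. Because $f$ is Lipschitz, hence continuous, and $G$ is second countable, the image $f(G)$ is separable, and therefore the closed linear span $V_0:=\overline{\mathrm{span}}\,f(G)\subseteq V$ is a separable Banach space. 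Every difference quotient $\frac{f(\exp(t X_i)\cdot x)-f(x)}{t}$ lies in $V_0$, and hence so does each limit $X_i(f)(x)$; this is the key observation that makes the reduction possible. A minor subtlety is that $X_i(f)$ is a priori only defined off the Borel null set $E_i$ of Lemma \ref{lemmabm}; extending it by $0$ on $E_i$ keeps it Borel, so each $X_i(f)$ may be regarded as a Borel map from the complete space $(G,d)$ into the separable space $V_0$.

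With these two points in place, Theorem \ref{thmfc} applies to each $X_i(f)$ and yields a first category set $F_i\subseteq G$ such that $X_i(f)$ is continuous on $G\setminus F_i$. I would then set $F:=\bigcup_{i=1}^{m_1}F_i$; being a finite union of first category sets, $F$ is itself of first category, and on $G\setminus F$ all the maps $X_i(f)$, $1\le i\le m_1$, are simultaneously continuous. Finally, by the Baire category Theorem \ref{thmbct}, $G\setminus F$ is dense in $G$, which is exactly the assertion. The only genuinely nontrivial step is the separability reduction to $V_0$; the completeness check and the finite union are routine.
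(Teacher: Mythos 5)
Your proposal is correct and follows essentially the same route as the paper: reduce to a closed separable subspace of $V$ containing the values of $f$ (hence of each $X_i(f)$), then apply Theorem \ref{thmfc} to each Borel map $X_i(f)$ and take the finite union of the resulting first category sets. The paper's proof is simply a terser version of this argument, leaving the completeness of $(G,d)$, the null-set extension, and the union over $i$ implicit.
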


\begin{proof}
Because $(G,d)$ is a complete separable metric space,
the Lipschitz map $f$ takes values in a closed separable subspace
$V^1$ of $V$. Hence, $X_i(f)$ ($1\le i\le m_1$) takes values in $V^1$
as well. Then the  conclusion is deduced from Theorem \ref{thmfc}.
\end{proof}

The proof of the next lemma is inspired by the method used by
Cheeger and Kleiner in their proof of Theorem $6.1$ in \cite{jc}.

\begin{lemma}\label{lemma bs}
If $x\in G\backslash F $, then for any $0 <\varepsilon < 1 $, we have
\[ \|f(x^{\prime})- f(x)\| \le hc\varepsilon\cdot d(x^{\prime},x),\]
where $c$ and $h$ are constants and $x^{\prime}= exp(t^2[X_i,X_j])\cdot
x $,  $[X_i,X_j]= X_k $ for some $m_1< k\le m_2$ and given $i,j\in [1,m_1]$.
\end{lemma}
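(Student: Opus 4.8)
The plan is to realize the point $x' = \exp(t^2[X_i,X_j])\cdot x$ as the endpoint of a short horizontal path and to exploit the cancellation of first-order terms that is forced by the fact that $X_k=[X_i,X_j]$ lies in $V_2$. The natural path is the group commutator loop $\gamma$ obtained by concatenating the four horizontal segments $\tau\mapsto\exp(\tau X_i)\cdot(\,\cdot\,)$, then $\exp(\tau X_j)$, then $\exp(-\tau X_i)$, then $\exp(-\tau X_j)$, for $\tau\in[0,t]$; by the Baker--Campbell--Hausdorff formula its endpoint is $\exp\!\big(t^2[X_i,X_j]+O(t^3)\big)\cdot x$ and its total length is $4t$. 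What matters for the argument is that the net signed displacement of $\gamma$ in each horizontal direction $X_1,\dots,X_{m_1}$ is zero.

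First I would write $f(x')-f(x)$ as the telescoping sum of the increments of $f$ along the four segments. Since $f$ is $L$-Lipschitz, its restriction to each horizontal segment is Lipschitz, hence absolutely continuous, so the one–dimensional fundamental theorem of calculus applies and each increment equals $\pm\int_0^{t} X(f)$ integrated along the segment, where $X$ is the relevant horizontal field $X_i$ or $X_j$. On each segment I would then split the integrand as $X(f)(x)$ plus the deviation $X(f)(\,\cdot\,)-X(f)(x)$. The four constant contributions are $t\,X_i(f)(x)+t\,X_j(f)(x)-t\,X_i(f)(x)-t\,X_j(f)(x)=0$; this is precisely the cancellation produced by the vanishing net horizontal displacement, and what remains is the sum of the four deviation integrals.

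To bound the deviations I would use that $x\in G\setminus F$: by Lemma~\ref{lemmads} each $X_i(f)$ is continuous on the dense set $G\setminus F$ at the point $x$, so, given $\varepsilon$, I can choose $t=t(\varepsilon,x)$ small enough that $\|X(f)(y)-X(f)(x)\|<\varepsilon$ whenever $y$ lies within Carnot--Carath\'eodory distance $O(t)$ of $x$, in particular all along the loop. Each deviation integral is then at most $\varepsilon t$, giving $\|f(x')-f(x)\|\le 4\varepsilon t$. Finally I would invoke the ball--box estimate of Theorem~\ref{thmpc}: the privileged coordinate of $x'$ relative to $x$ is $t^2$ in the $V_2$-slot of weight $w_k=2$, so $\|s\|=|t^2|^{1/2}=t$ and hence $d(x',x)\ge \tfrac1c\,t$, i.e. $t\le c\,d(x',x)$. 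Combining the two estimates yields $\|f(x')-f(x)\|\le h c\,\varepsilon\, d(x',x)$ with $h=4$.

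The main obstacle is the $O(t^3)$ correction in the commutator: in a Carnot group of step $\ge 3$ this term lies in $V_3$ and, having weight $3$, contributes a quantity of size $|t^3|^{1/3}=t$ to the distance, i.e. of the \emph{same} metric order as the displacement we want to produce, so the naive loop does not land close enough to $x'$. To repair this I would replace the single commutator by a horizontal word realizing $\exp(t^2 X_k)$ exactly: writing the fixed element $\exp(X_k)$ as a finite product $\prod_b \exp(a_b X_{i_b})$ of horizontal exponentials (possible since $V_1$ generates $\mathfrak g$) and applying the dilation $\delta_t$, one gets $\exp(t^2 X_k)=\prod_b \exp(t a_b X_{i_b})$, a word of length $O(t)$ whose net $V_1$-displacement still vanishes; the cancellation-plus-continuity argument then runs verbatim with $h$ equal to the word length. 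A secondary technical point is to ensure that the line derivative supplied by the fundamental theorem of calculus agrees with $X_i(f)$ at enough points to be controlled by continuity at $x$; here one uses the uniform bound $\|X_i(f)\|\le L$ together with the density of $G\setminus F$.
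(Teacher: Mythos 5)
Your skeleton is the same as the paper's: realize $x'=\exp(t^2[X_i,X_j])\cdot x$ as the endpoint of a short horizontal loop of length $O(t)$ with vanishing net $V_1$-displacement, telescope $f(x')-f(x)$ over the segments, apply the one-dimensional fundamental theorem of calculus on each segment, let the zeroth-order terms $\pm t\,X(f)(x)$ cancel, control the deviations by continuity of $X(f)$ at $x$, and convert $t$ into $d(x',x)$ via the ball--box estimate of Theorem \ref{thmpc}. One place where you are actually more careful than the paper: the paper asserts the exact identity $\exp(t^2[X_i,X_j])(x)=\exp(-tX_i)\exp(-tX_j)\exp(tX_i)\exp(tX_j)(x)$, which holds only when the higher brackets of $X_i,X_j$ vanish (step $2$); in step $\ge 3$ the BCH correction lies in $V_3$ and displaces the endpoint by a distance of order $(t^3)^{1/3}=t$, which is indeed fatal for the naive loop. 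Your repair --- write $\exp(X_k)$ as a finite horizontal word and apply the dilation $\delta_t$, noting that the net $V_1$-component of the word is the abelianized image of $X_k\in V_2$ and hence zero --- is correct and genuinely fixes a defect in the published argument.

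The gap is in the continuity step, which is the analytic heart of the lemma and which you relegate to a closing sentence. You cannot ``choose $t$ small enough that $\|X(f)(y)-X(f)(x)\|<\varepsilon$ whenever $y$ lies within distance $O(t)$ of $x$'': Lemma \ref{lemmads} gives continuity of $X(f)$ only as a function on $G\setminus F$, and $F$ is merely of first category, so it may well be dense; worse, $X(f)(y)$ need not even exist off a full-measure set, while your loop segments form a set of measure zero, so the integrands $X(f)(\gamma(s))$ supplied by the fundamental theorem of calculus are a priori line derivatives that need not coincide with $X(f)$ at any point you can control. The paper's proof deals with exactly this by perturbing each segment $\gamma_i$ to a parallel integral curve $\tau_i$ started at a point of $(G\setminus F)\cap B_r(x)$, paying $O(L\varepsilon t)$ per endpoint by the Lipschitz bound, and then estimating $\frac1t\int_0^t\|\eta_i f(\tau_i(s))-\eta_i f(x)\|\,ds<2\varepsilon$ by combining continuity at $\tau_i(0)\in G\setminus F$ with the Lebesgue differentiation theorem at $\tau_i(0)$ (the choice of $\tau_i(0)$ as a suitable Lebesgue point needs a Fubini-type argument, on which the paper is itself terse). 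Without some such device your four ``deviation integrals'' are not bounded by $\varepsilon t$, and the proof does not close; the same issue recurs verbatim in your dilated-word variant. So: right strategy, one real improvement over the paper, but the decisive step is asserted rather than proved.
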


\begin{proof}
For any $1\le i\le m_1$, $X_i(f)\mid_{G\backslash F}$ are continuous and $G\backslash F$ is dense in $G$ (Theorem    \ref{thmfc}). So there exists $\delta >
0$ such that if $x_1\in {G\backslash F} $ and $d(x,x_1)< \delta$,
then we have \[ |X_i(f)(x) - X_i(f)(x_1)| < \varepsilon,\,\,\,1\le i \le m_1.\]

Take a neighborhood $B_r(x)\subset G$ and $r< \delta $, then
$(G\backslash F)\bigcap B_r(x)$ is dense in $B_r(x)$.

Using $\eta_2, \eta_1, \eta_4, \eta_3$ to denote the four
vector fields: $X_i, X_j,-X_i,-X_j $,  where $1\le i,j\le m_1$. Then
we have the following equality:
\begin{eqnarray*}
\exp(t^2[X_i,X_j])(x)&=&\exp(-tX_i)\circ\exp(-tX_j)\circ\exp(tX_i)\circ\exp(tX_j)(x)\\
&=&\exp(t\eta_4)\circ\exp(t\eta_3)\circ\exp(t\eta_2)\circ\exp(t\eta_1)(x).
\end{eqnarray*}
Let ${\gamma}_i(t)=\exp(t\eta_i)\circ\exp(\eta_{i-1})\circ\dots\circ\exp(\eta_1)(x)$,
  then $\gamma_i$ is an integral curve
of $\eta_i$, $1\le i\le 4$.

By the denseness of $G\backslash F$, when $t< 1 $,
 there exists an integral curve $\tau_i$  of $\eta_i$ starting from $\tau_i(0)\in {(G\backslash F)\bigcap B_r(x)}$ such that
\begin{displaymath}
d(\tau_i(s),\gamma_i(s)) < \varepsilon t,\,\,\,\,s \in[0, t].
\end{displaymath}

We claim that
\[ \frac{1}{t}\int_0^t\|\eta_if(\tau_i(s))-\eta_if(x) \|ds < 2\varepsilon.\]
In fact, by the Lebesgue's lemma, \[ \lim\limits_{t\to
0}\frac{1}{t}\int_0^t\|\eta_if(\tau_i(s))-\eta_if(\tau_i(0)) \|ds =
0.\] So when $t$ is sufficiently small, we have
\begin{eqnarray*}
\frac{1}{t}\int_0^t\|\eta_if(\tau_i(s))-\eta_if(x)\|ds\
 &\le& \frac{1}{t}\int_0^t\|\eta_if(\tau_i(s))-\eta_if(\tau_i(0) \| ds+ \|\eta_if(\tau_i(0))-\eta_if(x)\|ds\\
 &=& \frac{1}{t}\int_0^t\|\eta_if(\tau_i(s))-\eta_if(\tau_i(0)) \|ds + \|\eta_if(\tau_i(0))-\eta_if(x)\|ds\\
 &<& 2\varepsilon.
 \end{eqnarray*}

Therefore
\begin{eqnarray*}
  \|f(x^{\prime})-f(x)\| &=& \|f\circ{\gamma_4}(t)-f\circ{\gamma_1}(0)\| \\
   &\le & \|f\circ{\tau_4}(t)-f\circ{\tau_1}(0)\|+ 2L\varepsilon t\\
   &\le& \|\sum_{i=1}^{4}(f\circ{\tau_i}(t)-f\circ{\tau_i(0)})\| + 8L\varepsilon t \\
&=&\|\sum_{i=1}^{4}\int_0^t\eta_i(f)(\tau_i(s))ds \| + 8L\varepsilon t \\
&\le &\|\sum_{i=1}^{4}\int_0^t\eta_i(f)(\tau_i(s))-\eta_i(f)(x)ds\|
+\|\sum_{i=1}^{4}(\eta_i(f)(x))t\| + 8L\varepsilon t \\
    &\le& \|\sum_{i=1}^{4}(\eta_i(f)(x))t\| + 8L\varepsilon t + 8t\varepsilon\\
    & = & 8tL\varepsilon + 8t\varepsilon = ht\varepsilon,
\end{eqnarray*} where $h = 8L + 8$.

For sufficiently small $t$, we have \[{\frac{t}{c}} \le d(x^{\prime},x)\le
ct,\]  where $x^{\prime}= \exp(t^2[X_i,X_j])\cdot x$ and the constant
$c$ depends  only on $x$. This is a direct consequence of
Theorem \ref{thmpc} bearing in mind the condition that $[X_i,X_j]=X_k$ for some $k\in (m_1,m_2]$.

 By the above results, we get
\begin{eqnarray}
\|f(x^{\prime})-f(x)\| &\le & ht\varepsilon\nonumber \\
&\le & hc\varepsilon\cdot d(x^{\prime},x).\nonumber
\end{eqnarray}
\end{proof}

\begin{theorem}\label{thmcn}
 Let $(G,d)$ be a Carnot group with a Carnot-Carath\'eodory
metric $d$. Then it does not admit a bi-Lipschitz embedding into any
Banach space with the Radon-Nikodym property.
\end{theorem}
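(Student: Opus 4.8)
The plan is to derive a contradiction by playing the one-sided upper estimate of Lemma~\ref{lemma bs} against the lower bi-Lipschitz bound. Suppose, for contradiction, that $f\colon G\to V$ is an $L$-bi-Lipschitz embedding into a Banach space $V$ with the Radon-Nikodym property; in particular
\begin{displaymath}
\frac{1}{L}\,d(x',x)\le \|f(x')-f(x)\|\qquad\text{for all } x,x'\in G .
\end{displaymath}
Since a Carnot group of step at least $2$ has nontrivial layer $V_2$, we may fix indices $i,j\in[1,m_1]$ and $k\in(m_1,m_2]$ with $[X_i,X_j]=X_k$; this is precisely the bracket configuration required to invoke Lemma~\ref{lemma bs}, and it is the point at which the genuinely sub-Riemannian (non-abelian) nature of $G$ enters the argument.

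First I would pin down a single base point. Lemma~\ref{lemmads} produces a first category set $F$ with $G\setminus F$ dense, hence nonempty, in $G$, so I may fix once and for all some $x\in G\setminus F$. For this fixed $x$ the two constants appearing in Lemma~\ref{lemma bs} are frozen: $h=8L+8$, together with the privileged-coordinate constant $c$ of Theorem~\ref{thmpc}, which depends only on $x$. Crucially, neither constant depends on the auxiliary parameters $\varepsilon$ and $t$.

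Next I would iterate the estimate over $\varepsilon$. Given any $\varepsilon\in(0,1)$, Lemma~\ref{lemma bs} furnishes, for all sufficiently small $t>0$, the point $x'=\exp(t^2[X_i,X_j])\cdot x$ satisfying $\|f(x')-f(x)\|\le hc\,\varepsilon\,d(x',x)$. Combining this with the lower bi-Lipschitz bound and observing that $d(x',x)>0$ because $t\ne 0$ forces $x'\ne x$, I may divide through by $d(x',x)$ to obtain
\begin{displaymath}
\frac{1}{L}\le hc\,\varepsilon .
\end{displaymath}
As $h$ and $c$ are now fixed while $\varepsilon\in(0,1)$ is arbitrary, letting $\varepsilon\to 0^+$ yields $\frac{1}{L}\le 0$, contradicting $L\ge 1$. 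Therefore no such embedding $f$ can exist.

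Essentially all the labor of this theorem has already been expended in Lemmas~\ref{lemmabm}--\ref{lemma bs}, so the theorem itself is a short contradiction; the part demanding care is conceptual rather than computational. The mechanism is that a displacement along a second-layer (bracket) direction is \emph{cheap} for $f$: its image moves by at most $hc\varepsilon\,d(x',x)$ with $\varepsilon$ as small as we like, yet by Theorem~\ref{thmpc} the Carnot-Caratheodory distance travelled is of order $t$, since the coordinate $s_k=t^2$ carries weight $w_k=2$ and $\|s\|=|s_k|^{1/2}=t$. It is exactly this clash between the quadratic coordinate scaling $t^2$ along $X_k$ and its weight-two metric scaling $t$ that Radon-Nikodym differentiability annihilates and that a lower bi-Lipschitz bound cannot survive. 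The one subtlety worth flagging is the legitimacy of sending $\varepsilon\to 0^+$ while holding the base point $x$ and the constants $h,c$ fixed, which is exactly why a single $x\in G\setminus F$ must be isolated at the very outset.
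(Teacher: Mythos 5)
Your proof is correct and is exactly the argument the paper intends: the paper's own proof of Theorem~\ref{thmcn} is a one-line citation of Lemmata~\ref{lemmabm}, \ref{lemmads} and \ref{lemma bs}, and you have supplied the omitted contradiction (lower bi-Lipschitz bound against the $hc\varepsilon\, d(x',x)$ upper bound at a fixed $x\in G\setminus F$, then $\varepsilon\to 0^+$) in precisely the intended way. Your explicit insistence on step at least $2$ is also the right reading of the statement, since an abelian Carnot group is just Euclidean space and the theorem would fail for it; the paper leaves this hypothesis implicit.
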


\begin{proof}
It follows from the Lemmata \ref{lemmabm}, \ref{lemmads} and
\ref{lemma bs}.
\end{proof}

\section {Gromov-Hausdorff Convergence}\label{sect gh}

 The goal of this section is to complete the proof of Theorem \ref{thm 1.3}.
 We first  prove the following Theorem \ref{thmcm} which is a refined version of a similar result which we learned from Rong \cite{R}.
By direct application of Theorem \ref{thmcm} to the case that $(X,p)$ and $(Y,q)$ are metric tangent cones, we get corollary \ref{corotc} which is important
in the proof of Theorem \ref{thm 1.3}.

\begin{theorem}[Convergence of $L$-biLipschitz maps]\label{thmcm}
Suppose $\lim\limits_{i\to\infty}(X_i,p_i)=(X,p)$ and
$\lim\limits_{i\to\infty}(Y_i,q_i)=(Y,q)$ are two sequences of
locally compact length spaces which converge in the Gromov-Hausdorff
sense to two locally compact length spaces respectively. If
$\{f_i:(X_i,p_i)\to (Y_i,q_i)\}_{i=1}^{\infty}$ is a sequence of
$L$-biLipschitz maps and $f_i(p_i)= q_i $ for all $i$ with $L$ a
fixed constant. Then there is a subsequence of $f_i$ converging to an
$L$-biLipschitz map $f:(X,p)\to(Y,q)$ and $f(p)=q$ in the sense that
if $\lim\limits_{i\to \infty}{x_i = x}$, then $\lim\limits_{i\to
\infty}{f_i(x_i)= f(x)}$.
\end{theorem}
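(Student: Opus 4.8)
The plan is to construct the limit map $f$ by a diagonal argument on a countable dense subset of $X$, then extend by continuity and uniformity, a standard Arzel\`a--Ascoli-type procedure adapted to the pointed Gromov--Hausdorff setting. Recall that convergence $\lim_i(X_i,p_i)=(X,p)$ in the pointed Gromov--Hausdorff sense means there are (for each radius $R$ and each scale $\eps$) approximate isometries; concretely I would fix $\eps_i\to 0$ and maps $\phi_i:X\to X_i$, $\psi_i:Y_i\to Y$ that $\eps_i$-almost preserve distances on balls of radius $R_i\to\infty$ centered at $p,p_i,q,q_i$ and almost-commute with the base points. The idea is to transport $f_i$ back to a map on $X$, namely $g_i:=\psi_i\circ f_i\circ\phi_i:X\to Y$, and show a subsequence of the $g_i$ converges pointwise on a countable dense set $D=\{x_1,x_2,\dots\}\subseteq X$ to a map $f$ which is then seen to be $L$-biLipschitz.

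First I would establish the key equicontinuity-plus-boundedness input: because each $f_i$ is $L$-biLipschitz with $f_i(p_i)=q_i$, the images $g_i(x_k)$ stay in a bounded region of $Y$ (distance at most roughly $L\, d(p,x_k)$ from $q$, up to the $\eps_i$ errors), and since $\lim_i(Y_i,q_i)=(Y,q)$ with $Y$ a length space, closed bounded sets behave well enough that the sequence $\{g_i(x_k)\}_i$ is precompact in $Y$ for each fixed $k$. Then a Cantor diagonal extraction over $k=1,2,\dots$ yields a single subsequence (still denoted $g_i$) with $g_i(x_k)\to f(x_k)$ for every $x_k\in D$. For the biLipschitz estimate on $D$ I would pass to the limit in
\begin{equation*}
\frac{1}{L}\,d_{X_i}(\phi_i(x_k),\phi_i(x_l))-\text{(error)}\le d_{Y_i}(f_i\phi_i(x_k),f_i\phi_i(x_l))\le L\,d_{X_i}(\phi_i(x_k),\phi_i(x_l))+\text{(error)},
\end{equation*}
using that $\phi_i,\psi_i$ distort distances by at most $\eps_i\to 0$ on the relevant balls, to obtain
\begin{equation*}
\tfrac{1}{L}\,d_X(x_k,x_l)\le d_Y(f(x_k),f(x_l))\le L\,d_X(x_k,x_l)
\end{equation*}
for all $x_k,x_l\in D$. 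In particular $f|_D$ is $L$-Lipschitz, hence uniformly continuous, so it extends uniquely to a map $f:X\to Y$ satisfying the same two-sided bound on all of $X$ by density and continuity; and $f(p)=q$ follows from $f_i(p_i)=q_i$ together with the base-point compatibility of the approximate isometries.

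The main obstacle I expect is bookkeeping the Gromov--Hausdorff errors rigorously and uniformly: one must choose the radii $R_i$ and tolerances $\eps_i$ so that all the points $\phi_i(x_k)$ for the finitely many indices relevant at each stage lie within the region where $\phi_i,\psi_i,f_i$ are jointly controlled, and then confirm that the lower biLipschitz bound survives the limit — the upper bound is an easy Lipschitz/equicontinuity statement, but the lower bound $d_Y(f(x_k),f(x_l))\ge \tfrac1L d_X(x_k,x_l)$ requires that the approximating distances do not collapse, which is exactly where the hypothesis that $X$ and $Y$ are \emph{length} spaces (so that the Gromov--Hausdorff limits are genuine and distances are realized) is used. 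A secondary point to handle carefully is that $f$ is \emph{a priori} only defined as a limit of the transported maps and I must verify the extension is well defined independent of the chosen dense set; this is automatic once the two-sided bound holds on $D$, since it forces $f|_D$ to be a biLipschitz, in particular injective and uniformly continuous, embedding of $D$ whose continuous extension is unique.
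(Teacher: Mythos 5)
Your proposal is correct and follows essentially the same Arzel\`a--Ascoli-type argument as the paper: pick a countable dense subset, transport points through the Gromov--Hausdorff approximations, extract a diagonal subsequence so the images converge, pass the two-sided $L$-biLipschitz bounds to the limit, and extend by uniform continuity; like the paper, your precompactness step implicitly uses that closed balls in the pointed limits are compact (the standard properness convention for pointed GH convergence of length spaces). The only cosmetic difference is that you handle non-compactness with a single diagonal extraction over the dense set of all of $X$, whereas the paper first proves the compact case and then exhausts $X$ by closed balls $B_r(p)$.
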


\begin{proof}
Firstly, we prove the theorem in the case that $X$ and $Y$ are
compact metric spaces. In this case, we can omit the base points.
Let $A=\{a_1,a_2,a_3,a_4,\cdots\}$ denote a countable dense subset
of $X$. We first define an $L$-biLipschitz map from $A$ to $Y$:
\begin{displaymath}
f: A \to Y.
\end{displaymath}
This extends uniquely to a map from $X$ to $Y$. For $a_1$, let
$x_i\in X_i$ such that $\lim\limits_{i\to \infty}{x_i = a_1}$, in
$\coprod\limits_{i=1}^{\infty} X_i\coprod X$. By the equicontinuity,
there exist a subsequence $\{X_{i_1}\}$ of $\{X_i\}$ and $x_{i_1}\in
X_{i_1}$ such that $\lim\limits_{i_1\to \infty}{x_{i_1}= a_1}$ and
$\lim\limits_{i_1\to \infty}{f_{i_1}(x_{i_1})= b_1}$. We define
$f(a_1)= b_1$. Repeating this process, and by the standard diagonal
argument, we can find a subsequence $\{X^{\prime}_j\}$ of $\{X_i\}$
such that$\lim\limits_{j\to\infty}X^{\prime}_j=X$  and if
$\lim\limits_{l\to\infty}x_{k,l}=a_k$, then
$\lim\limits_{l\to\infty}f_l(x_{k,l})= b_k$, and we define
$f(a_k)=b_k$, where $k\in\mathbb{N}$. Obviously, the map $f: A \to
Y$ is $L$-biLipschitz and can be extended uniquely to an
$L$-biLipschitz map $f: X\to Y$.

 We now consider the case when either $X$ or $Y$ is not
  compact. By the definition of Gromov-Hausdorff convergence of metric spaces, for any $r>0$, we can assume that the
  closed balls $\lim\limits_{i\to\infty}{B_r(p_i)}= B_r(p)$ and $\lim\limits_{i\to\infty}{B_r(q_i)}= B_r(q)$.
  Applying the above result, we conclude that there is an
  $L$-biLipschitz map $f_r: B_r(p)\to B_r(q)$ such that $f_r(p)= q
  $. Note that $f_r$  depends only on $A_r = B_r(p)\bigcap A$.
  Let $ X=\bigcup\limits_{s=1}^{\infty}B_s(p)$. Clearly, for $s,t\in\mathbb{N}, s<t$, $B_s(p)\subseteqq
  B_t(p)$ and $f_s=f_t|B_s(p) $. We get the desired $L$-biLipschitz map $f:(X,p)\to(Y,q)$.
\end{proof}

\begin{corollary}\label{corotc}
Suppose $f:(X,d_X)\to(Y,d_Y)$ is an L-biLipschitz map between two
metric spaces $(X,d_X)$ and $(Y,d_Y)$.  If $(T_xX,d_1)$ is a metric
tangent cone of $(X,d_X)$ at $x$ and $(T_{f(x)}Y,d_2)$ is the unique
metric tangent cone of $(Y,d_Y)$ at $f(x)$, then there exists an
L-biLipschitz map:
\begin{displaymath} D_{x}f: (T_xX,d_1) \to (T_{f(x)}Y,d_2).
\end{displaymath}
 \end{corollary}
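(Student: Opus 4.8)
The plan is to realize both tangent cones as pointed Gromov--Hausdorff limits of rescaled copies of the ambient spaces and then to feed these rescalings directly into Theorem \ref{thmcm}. Recall that the metric tangent cone $(T_xX,d_1)$ of $(X,d_X)$ at $x$ is, by definition, a pointed Gromov--Hausdorff limit
\[
(T_xX,d_1)=\lim_{i\to\infty}(X,\lambda_i d_X,x)
\]
for some sequence of scales $\lambda_i\to\infty$. First I would fix such a sequence $\{\lambda_i\}$ realizing the tangent cone of $X$ at $x$. Setting $X_i:=(X,\lambda_i d_X)$ with base point $p_i:=x$ and $Y_i:=(Y,\lambda_i d_Y)$ with base point $q_i:=f(x)$, these are length spaces whenever $X$ and $Y$ are, and the tangent cones in question arise precisely as limits of these pointed rescalings.

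The key observation is that $f$ is left unchanged by rescaling both metrics by the common factor $\lambda_i$: if $\frac{1}{L}d_X(a,b)\le d_Y(f(a),f(b))\le L\,d_X(a,b)$ for all $a,b$, then multiplying through by $\lambda_i$ shows that the same map, now viewed as $f_i:X_i\to Y_i$, is again $L$-biLipschitz with the identical constant $L$. Moreover $f_i(p_i)=f(x)=q_i$, so the base points are respected for every $i$. Thus the sequence $\{f_i\}$ already satisfies all the algebraic hypotheses of Theorem \ref{thmcm}; what remains is to arrange the two Gromov--Hausdorff convergences along a \emph{single} sequence of scales.

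This is where the uniqueness hypothesis enters. By construction $X_i\to(T_xX,d_1)$ in the pointed sense along the chosen sequence $\{\lambda_i\}$. Because $(T_{f(x)}Y,d_2)$ is the \emph{unique} tangent cone of $Y$ at $f(x)$, the rescaled spaces $Y_i=(Y,\lambda_i d_Y,f(x))$ converge to $(T_{f(x)}Y,d_2)$ along this very same sequence of scales. With both convergences now expressed along one sequence $\{\lambda_i\}$, Theorem \ref{thmcm} applies verbatim to the $L$-biLipschitz maps $f_i:(X_i,p_i)\to(Y_i,q_i)$ with $f_i(p_i)=q_i$, producing an $L$-biLipschitz map $Df:(T_xX,d_1)\to(T_{f(x)}Y,d_2)$ carrying the base point of $T_xX$ to that of $T_{f(x)}Y$, as required.

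The main obstacle is exactly this matching of rescaling sequences on the two sides: a priori the scales that realize the tangent cone of $X$ need not realize any limit for $Y$, and the bi-Lipschitz image of a blow-up sequence need not itself blow up to a tangent cone. It is the uniqueness of the tangent cone of $Y$ at $f(x)$ that dissolves this difficulty, forcing $Y_i$ to converge to $(T_{f(x)}Y,d_2)$ regardless of which admissible sequence we selected for $X$. A secondary point worth checking is that the rescaled spaces and their limits are genuine length spaces, so that the hypotheses of Theorem \ref{thmcm} are literally met; this is automatic, since rescaling preserves the length-space property and pointed Gromov--Hausdorff limits of length spaces are again length spaces.
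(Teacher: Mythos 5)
Your proposal is correct and follows exactly the route the paper intends: the paper offers no separate proof of Corollary \ref{corotc}, describing it only as a direct application of Theorem \ref{thmcm}, and your argument (rescale both spaces by a common sequence $\lambda_i\to\infty$, note that $f$ remains $L$-biLipschitz with the same constant, and invoke uniqueness of the tangent cone of $Y$ at $f(x)$ to match the convergence of the two rescaled sequences) is precisely the "direct application" being alluded to. Your explicit attention to why the uniqueness hypothesis is needed and to the length-space hypothesis of Theorem \ref{thmcm} supplies details the paper leaves implicit.
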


\begin{theorem}(Mitchell, \cite{jm}; Bellalche, \cite{ab})\label{thmtc}
 For a sub-Riemannian manifold $(M,\Delta,g)$, the tangent cone of $(M,d)$
at every regular point is  $(G,d^c)$ where G is a Carnot Lie group
with left invariant Carnot-Carath\'eodory metric $d^c$.
\end{theorem}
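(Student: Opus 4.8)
The plan is to realize the tangent cone as the pointed Gromov-Hausdorff limit of the rescaled spaces $(M,\lambda^{-1}d,p)$ as $\lambda\to0$, and to identify this limit with a Carnot group built by a nilpotent approximation of the defining vector fields. First I would exploit regularity of $p$: since $\dim\Delta^i$ is locally constant near $p$, the flag $\Delta^1_p\subset\Delta^2_p\subset\cdots\subset\Delta^s_p=T_pM$ has constant dimensions $n_1<n_2<\cdots<n_s=n$ throughout a neighborhood, so one can choose a local frame $Y_1,\dots,Y_n$ adapted to the flag (the first $n_1$ spanning $\Delta$, the next $n_2-n_1$ completing $\Delta^2$, and so on) and assign to the $i$-th coordinate the weight $w_i=k$ whenever $Y_i$ first appears in $\Delta^k\setminus\Delta^{k-1}$. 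Passing to privileged coordinates at $p$, the Ball-Box theorem, the local analogue of Theorem \ref{thmpc}, provides constants making $d(p,q)$ comparable to the weighted quasi-norm $\|x\|=\sum_i|x_i|^{1/w_i}$, uniformly for $q$ near $p$.

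Next I would introduce the anisotropic dilations $\delta_\lambda(x_1,\dots,x_n)=(\lambda^{w_1}x_1,\dots,\lambda^{w_n}x_n)$ and study the behaviour of the frame under them. Expanding each $Y_i$ in a Taylor series organized by the weight grading and extracting the terms of lowest order, one obtains limiting vector fields $\hat X_1,\dots,\hat X_{n_1}$. The key algebraic point is that these are $\delta_\lambda$-homogeneous of degree $-1$, and consequently the Lie algebra $\hat{\mathfrak g}$ they generate is nilpotent and stratified: brackets of $k$ of the $\hat X_i$ carry degree $-k$ and span a layer $V_k$, so that $\hat{\mathfrak g}=\bigoplus_{k=1}^{s}V_k$ with $V_{k+1}=[V_1,V_k]$. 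In other words $\hat{\mathfrak g}$ is a Carnot algebra, and the simply connected group $G$ it integrates carries the left-invariant sub-Riemannian structure for which $\hat X_1,\dots,\hat X_{n_1}$ is orthonormal, with Carnot-Caratheodory distance $d^c$.

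Finally I would prove the convergence $(M,\lambda^{-1}d,p)\to(G,d^c)$ in the pointed Gromov-Hausdorff sense. The uniform Ball-Box estimates of the first step control the rescaled distances on balls of fixed radius independently of $\lambda$, furnishing the equicontinuity and precompactness needed to extract a limit. Identifying the limit distance with $d^c$ then amounts to showing that rescaled horizontal curves converge to horizontal curves for the approximating frame and that their lengths pass to the limit, which one obtains by comparing the flows of $Y_i$ with those of $\hat X_i$ using the weight estimates. The dilation $\delta_\lambda$ intertwines the two structures in the limit, yielding the isometric identification of the tangent cone with $(G,d^c)$.

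The hard part will be this convergence step: making precise that the nilpotent approximation is not merely a pointwise-on-coordinates limit but genuinely computes the Gromov-Hausdorff limit of the metric spaces. This requires uniform (in $\lambda$) control of Carnot-Caratheodory distances under rescaling, precisely the content of the uniform Ball-Box theorem at a regular point, together with a careful comparison of the integral curves of $Y_i$ and $\hat X_i$ to guarantee that lengths of horizontal curves converge. Regularity of $p$ is exactly what makes these estimates uniform, and hence what makes the tangent cone well-defined and independent of the adapted frame chosen.
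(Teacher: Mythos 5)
The paper does not prove this statement: it is quoted as a known theorem of Mitchell and Bella\"iche, with references \cite{jm}, \cite{ab} (see also \cite{aa}), so there is no internal proof to compare against. Your outline is precisely the standard nilpotent-approximation argument from those sources --- adapted frame and privileged coordinates at a regular point, weighted dilations $\delta_\lambda$, extraction of the degree $-1$ homogeneous parts $\hat X_i$, the uniform Ball-Box estimate, and pointed Gromov--Hausdorff convergence of $(M,\lambda^{-1}d,p)$ to the homogeneous limit --- so as a route it is the right one and matches the cited literature.

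One point deserves sharpening. You invoke regularity of $p$ only to get uniformity of the Ball-Box constants, but its decisive role is algebraic: it is what guarantees that the Lie algebra $\hat{\mathfrak g}$ generated by the homogeneous fields $\hat X_1,\dots,\hat X_{n_1}$ has dimension exactly $n$ and acts simply transitively, so that $\mathbb{R}^n$ is identified with the Carnot group $G$ itself. At a singular point the nilpotent approximation still exists and the metric tangent cone still exists, but $\dim\hat{\mathfrak g}$ can exceed $n$ and the cone is only a quotient $G/H$ of a Carnot group, not a group. Your sketch asserts that the brackets of the $\hat X_i$ "span a layer $V_k$" giving a stratification of the right dimension; that is exactly the step that fails without regularity, so it should be stated as a consequence of the constancy of $\dim\Delta^i_q$ near $p$ (the graded object $\bigoplus_k \Delta^k_p/\Delta^{k-1}_p$ with the induced bracket is then a well-defined Carnot algebra of dimension $n$). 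With that made explicit, the outline is a faithful summary of the Mitchell--Bella\"iche proof.
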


\begin{remark}\label{remarktc}
If the generic nonholonomy degree of $(M,\Delta,g)$ is not smaller
than $2$, then the set of the regular points of $M$ is open and dense in $M$ as pointed out by Bellaihe (\cite{ab}, the last paragraph of p.31). By the Theorem \ref{thmtc}, the tangent cone $(G,d^c)$ of $(M,d)$ at a regular point is a nonabelian Carnot Lie group with left invariant Carnot-Carath\'eodory metric $d^c$.
\end{remark}

\begin{proof}[Proof of Theorem \ref{thm 1.3}]
Let $(M,d)$ be a sub-Riemannian manifold with generic nonholonomy
degree $\ge 2$ and sub-Riemannian distance $d$. Suppose $(V,\|\cdot\|)$ is a
Banach space satisfying the Radon-Nikodym property with the distance
function $d^*$ induced by the norm $\|\cdot\|$.

We proceed by contradiction, suppose there is a bi-Lipschitz map
\begin{displaymath}
f: (M,d)\to(V,d^*).
\end{displaymath}
By Remark \ref{remarkrpd}, the set of the regular points of $M$ with the
degree of non-holonomy satisfying $\ge 2$ is dense in
$M$. So we can get a bi-Lipschitz map $D_{x}f$ from the tangent cone
$(T_{x}M,d_1)$ at a regular point $x$ of $M$ to the tangent cone $(T_{f(x)}V,d_2)$
of $V$, i.e.,
\begin{displaymath}
D_{x}f: (T_{x}M,d_1)\to (T_{f(x)}V,d_2).
\end{displaymath}
Because $(V,d^{*})$ is a Banach space, the tangent cone of $V$ at $f(x)$ is
\begin{displaymath}
(T_{f(x)}V,d_2)= (V,d^*).
\end{displaymath}

By Remark \ref{remarktc}, $(T_xM,d_1)$ is a non-commutative Carnot
group with left invariant Carnot-Carath\'eodory metric $d_1$.
However, it is impossible by Theorem \ref{thmcn}. Therefore, every
sub-Riemannian manifold whose generic degree of non-holonomy is not
smaller than $2$ can not be bi-Lipschitzly embedded in any Banach
space with Radon-Nikodym property.
\end{proof}

\begin{proof}[Proof of Corollary \ref{coroesrnercd}]
We still go on by contradiction. If there is a biLipschitz embedding of a complete sub-Riemmannian manifold into a metric measure space $(M,d,m)$ satisfying $RCD(K,N)$ (Def. \ref{defrcd}), where $(M,d)$ is a length space and $supp(m)= M$. As in the above argument, through the blowup analysis (Theorem \ref{thmcm}), we get a biLipschitz embedding from a Carnot group with Carnot-Carath\'eodory metric to a Euclidean space (Mondino-Naber, \cite{am}, see also Theorem \ref{thmrcd}). However, it is impossible by Theorem \ref{thm 1.3}.
\end{proof}

\begin{remark}
In the above proof of Corollary \ref{coroesrnercd}, the condition that the metric measure space $(M,d,m)$ satisfies $supp(m)= M$, is essential. Because, by the Corollary 2.4 in \cite{kts}($\uppercase\expandafter{\romannumeral2}$), under the condition, the $CD(K,N)$ space $(M,d,m)$ is locally compact.
\end{remark}

\section{Curvature-dimension condition}\label{sect cd}

In this section, we recall the definition of curvature-dimension
 condition and some of its basic properties.

Given a metric measure space $(M,d,m)$ and a number
 $N \in \mathbf{R}$ with $N \geq 1$, we define the Renyi entropy functional $\mathcal{S}_{N}(\cdot|m):\mathcal{P}_{2}(M,d)\rightarrow\mathbf{R}$ with respect to $m$ by
 $$\mathcal{S}_{N}(\nu|m)= -\int_{M}{\rho^{\frac{-1}{N}}d\nu}$$
where $\mathcal{P}_{2}(M,d)$ denotes the $L_{2}$-Wasserstein space
of probability measures on $M$ and $\rho$ is the density of the
absolutely continuous part $\nu^{c}$ in the Lebesgue decomposition
$\nu=\nu^{c}+\nu^{s}=\varrho m+\nu^{s}$ of
$\nu\in\mathcal{P}_{2}(M,d)$.

\begin{definition}\label{defcd}
Given two numbers $K,N\in\mathbf{R}$ with $N\geq1$, we say that a
metric measure space $(M,d,m)$ satisfies the curvature-dimension
condition $CD(K,N)$ if for each pair
$\nu_{0},\nu_{1}\in\mathcal{P}_{2}(M,d)$, there is an optimal
coupling $q$ of $\nu_{0}^c=\varrho_{0}m$ and $\nu_{1}^c=\varrho_{1}m$,
and a geodesic $\Gamma:[0,1]\rightarrow\mathcal{P}_{2}(M,d)$
connecting $\nu_{0}$ and $\nu_{1}$, with
\begin{equation*}
\mathcal{S}_{N^{'}}(\Gamma(t)|m)\leq-\int_{M\times
M}[\tau^{(1-t)}_{K,N^{'}}(d(x_{0},x_{1}))\varrho_{0}^{\frac{-1}{N^{'}}}(x_{0})
+\tau^{(t)}_{K,N^{'}}(d(x_{0},x_{1}))\varrho_{1}^{\frac{-1}{N^{'}}}(x_{1})
]dq(x_{0},x_{1})
\end{equation*}
for all $t\in[0,1]$ and all $N^{'}\geq N$,
where

\begin{equation*}\tau^{(t)}_{K,N}(\theta)=
\begin{cases} \infty & \mbox{if $K \theta \geq {(N-1)\pi^{2}}$}\\ t^{\frac{1}{N}}(\frac{\sin(t\theta\sqrt{K(N-1)})}{\sin(\theta\sqrt{K(N-1)})})^{1-\frac{1}{N}}
 & \mbox{if $0<K\theta<(N-1)\pi^{2}$}\\
t & \mbox{if $K\theta^{2}=0$ or if $K\theta^{2}<0$, $N=1$}\\
{t^{\frac{1}{N}}}{(\frac{\sinh(t\theta\sqrt{-K(N-1)})}{\sinh(\theta\sqrt{-K(N-1)})})^{1-\frac{1}{N}}}&\mbox{if$K\theta^{2}<0$,
$N>1$}.
\end{cases}
\end{equation*}

\end{definition}

\begin{remark}
A coupling $q$ of two probability measures $\mu$ and $\nu$ on a
metric space $M$ is a probability measure on the product space
$M\times M$ whose marginals are the given measures $\mu$ and $\nu$.
For the condition of an coupling being optimal, please refer to
\cite{kts} and \cite{jl}.

\end{remark}

 We list some properties of curvature-dimension condition to be used later in this paper, and refer to \cite{kts} for more complete discussions.

 \begin{lemma}\label{lemmms}\label{lemma cdp}
Let $(M,d,m)$ be a metric measure space which satisfies the
$CD(K,N)$ condition for some pair of real numbers $K$ and $N$. Then
the following properties hold.
\begin{enumerate}
\item Each metric measure space $(M^{'},d^{'},m^{'})$ which is
isomorphic to $(M,d,m)$ satisfies the $CD(K,N)$ condition.
\item For each $\alpha,\beta >0$, the metric measure space $(M,\alpha d,\beta
m)$ satisfies the $CD(\alpha^{-2}K,N)$ condition.
\item For each convex subset $M^{'}$ of $M$, the metric measure space
$(M^{'},d,m)$ satisfies the same $CD(K,N)$ condition.
\end{enumerate}

\end{lemma}

\section{Cheeger energy and Sobolev space on metric measure spaces}\label{sect cs}

In this section, we will give the proof of Theorem \ref{thm1.1}, Theorem \ref{thm1.2} and Theorem \ref{thmih}. We firstly recall some basics about the differential structures of metric measure spaces, which will be used in this section.

In order to define the Sobolev space $W^{1,2}(M,d,m)$ on metric measure space, we need to introduce the weak gradients. As far as we know, there are four notions of weak gradients. However, under our assumptions, they are all equivalent (\cite{ew}, Theorem 7.4 and Further comments and extensions). In the following, we only need two of them, namely the Cheeger's gradient $|\nabla{f}|_{C,2}$ and $2$-weak upper gradient $|Df|_{w,2}$.

Firstly, recall the notion of absolute continuity of curves on the metric measure spaces which plays an important role in the definition of weak gradients.

A curve $\gamma:[0,1]\rightarrow M$ is said to be absolutely continuous if
\begin{equation}\label{L1bound}
\begin{split}
 d(\gamma(t),\gamma(s))\leq\int_{s}^{t}g(r)dr \qquad\forall s,t\in[0,1],s\leq t
\end{split}
\end{equation}
for some $g\in L^{1}(0,1)$. If $\gamma$ is absolutely continuous,
the metric derivative $|\dot{\gamma}|:[0,1]\rightarrow[0,\infty]$ is
defined by
\begin{equation*}
|\dot{\gamma}|:=\underset{h\rightarrow0}{\lim}\frac{d(\gamma(t+h),\gamma(t))}{|h|}.
\end{equation*}
One can  show that the limit exists for almost every
$t$, and $|\dot{\gamma}| \in L^{1}(0,1)$, and it is the minimal
$L^{1}$-function (up to Lebesgue negligible sets) for which the
bound (\ref{L1bound}) holds.
\begin{remark}\label{ccmd}
For the case of regular sub-Riemannian manifold with Carnot-Carath\'eodory distance, the absolutely continuous curves $\gamma$ are almost differentiable and Horizontal. The metric derivatives $|\dot{\gamma}|$  of $\gamma$ at the differentiable points are the usual norm of the tangent vectors of the curves (\cite{mksv}, Corollary 2.8.6, Property 3.2.2 and Theorem 3.2.10).
\end{remark}

Following \cite{ew}, we give the following definitions.

\begin{definition}(Upper gradient)
Suppose $f$ is a Borel function on $X$, we say that a Borel function $h$ is an upper gradient of $f$, if the inequality

\begin{equation}
     |f(0)-f(1) | \leq \int_{\gamma}{h(\gamma)}|\dot{\gamma}|ds
\end{equation}
holds for all absolutely continuous curves $\gamma:[0,1]\rightarrow X$.
\end{definition}

The next notion is due to Cheeger, which is defined via upper gradient by a relaxation procedure.

\begin{definition}($2$-relaxed upper gradient)\label{defrug}
We say that $h$ $\in$ $L^{2}(M,m)$ is a $2$-relaxed upper gradient of $f$ $\in$ $L^{2}(M,m)$ if there exist $\tilde{h}$ $\in$ $L^{2}(M,m)$, functions $f_{n}\in
L^{2}(M,m)$ and upper gradient $h_{n}$ of $f_{n}$ such that:
\begin{enumerate}
\item $f_{n}\rightarrow f$ in $L^{2}(M,m)$ and $h_{n}$ weakly converge to $\tilde{h}$ in $L^{2}(M,m)$;
\item $\tilde{h}\leq h$.
\end{enumerate}
We say that $h$ is a minimal $2$-relaxed upper gradient of $f$ if its $L^{2}(M,m)$ norm is minimal among $2$-relaxed upper gradients. We denote by
$|\nabla{f}|_{C,2}$ the minimal $2$-relaxed upper gradient.

\end{definition}

We will denote by $AC^{2}([0,1],M)$ the class of absolutely
continuous curves with metric derivative in $L^{2}(0,1)$.

In order to introduce the concept of weak upper gradient, some special probability measure is
defined on the space of continuous paths of $M$.

\begin{definition}
Let $(M,d,m)$ be a metric measure space and $\pi$ be a probability of $C([0,1],M)$. We say that $\pi$ has bounded compression provided there exists $N>0$
such that
\begin{gather}
(e_t)_{\sharp}\pi \leq N\cdot m,\quad\forall t \in [0,1],
\end{gather}
 where $e_{t}$ is the evaluation function from $C([0,1],M)$ to $M$, at $t$.\\
 We say that $\pi$ is a test plan if it has bounded compression and is concentrated on $AC^{2}([0,1],M)$ such that
\begin{gather}
 \int\int_{0}^{1}|\dot{\gamma}|^{2}dtd\pi(\gamma)< \infty.
\end{gather}
\end{definition}

\begin{definition}\label{defwg}
Let $f:M\rightarrow \overline{\mathbf{R}}$ be any Borel function. A
function $G$ $\in$ $L^{2}(M,m)$ is called a weak upper
gradient of $f$ if
\begin{equation}
|f(\gamma(0))-f(\gamma(1))| \leq
\int_0^1{G(\gamma(t))|\dot{\gamma}(t)|}dt < \infty
\end{equation}
for almost every curve $\gamma:[0,1]\rightarrow M$ in
$AC^{2}([0,1];M)$, under every test plan $\pi$.

\end{definition}

It turns out (\cite{la}, P.321) that there is a weak upper
gradient $|D(f)|_{w}:M\rightarrow [0,\infty]$ having the property
that
$$|D(f)|_{w}(x)\leq G(x)$$
for $m$-a.e  $x\in M$, where $G$ is any weak upper gradient of $f$. The
 function $|D(f)|_{w}$ will be called the minimal weak upper gradient
 of $f$.

In terms of minimal weak upper gradient, one defines the Cheeger energy of $f:M\rightarrow \overline{\mathbf{R}}$ by
\begin{equation}
Ch(f):=\frac{1}{2}\int_{M}|Df|_{w}^{2}dm
\end{equation}

 The Sobolev space $W^{1,2}(M,d,m)$ is by definition the space of $L^{2}(M,m)$-
functions having finite Cheeger energy, and it is endowed with the
natural norm $\|f\|_{W^{1,2}}^{2}:= \|f\|_{L^{2}}^{2} + 2Ch(f)$
which makes it into a Banach space.

\begin{remark}\label{}
In general, $W^{1,2}(M,d,m)$ is not a Hilbert space. For instance, on
a smooth Finsler manifold the space $W^{1,2}$ is Hilbert if and only
if it is actually Riemannian manifold.
\end{remark}

\begin{definition}[Infinitesimally Hilbertian space]\label{defihs}
 A metric measure space $(M,d,m)$ is said to be
an infinitesimally Hilbertian space if $(M,d)$ is a complete separable
metric space, $m$ is a locally finite non-negative measure on $M$, and the
Sobolev space $W^{1,2}(M,d,m)$ is a Hilbert space.
\end{definition}

\begin{definition}\label{defrcd}
If an infinitesimally Hilbertian space $(M,d,m)$ satisfies
curvature-dimension condition $CD(K,N)$, we say $(M,d,m)$ is a
$RCD(K,N)$ space.
\end{definition}

In \cite{am}, Mondino and Naber proved that
\begin{theorem} (Mondino-Naber, \cite{am})\label{thmrcd}
Suppose $(X, d, m)$ is a complete and separable metric space with $m$ a locally finite nonnegative complete Borel measure satisfying $RCD(K,N)$ with $N>1$, then for
m-a.e. point the tangent cone is an Euclidean space of dimension at
most $N$.
\end{theorem}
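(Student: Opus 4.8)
The plan is to follow the Cheeger--Colding strategy for Ricci-limit spaces, transplanted to the synthetic $RCD(K,N)$ setting by means of Gigli's differential calculus and splitting theorem. The starting point is that the $CD(K,N)$ condition forces a local Bishop--Gromov volume comparison, hence the doubling property, so by Gromov's precompactness every pointed rescaling $(X,\, d/r_i,\, m/m(B_{r_i}(x)),\, x)$ with $r_i\to 0$ admits a subsequence converging in the pointed measured Gromov--Hausdorff sense; its limits are the tangent cones at $x$. The first step is to check that each such tangent cone $(Y,d_Y,m_Y,o)$ is again an $RCD(0,N)$ space (the condition is stable under measured Gromov--Hausdorff limits, and rescaling sends $K$ to $0$ in the blow-up) and, using the rigidity case of Bishop--Gromov (volume cone implies metric cone, in the form established for $RCD$ spaces), that $Y$ is a metric measure cone with vertex $o$. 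The candidate dimension is the volume-growth exponent of this cone; that this exponent is an integer $k\le N$, and that the cone is in fact Euclidean, is the substance of the theorem and is established below.

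The heart of the argument is to produce, at $m$-a.e.\ point $x$, a Euclidean factor of the correct dimension. First I would introduce \emph{$\delta$-splitting maps}: tuples $u=(u_1,\dots,u_k):B_r(x)\to\mathbb{R}^k$ whose components are harmonic (minimizers of the Cheeger energy with prescribed boundary data, in the sense of \S\ref{sect cs}) and satisfy the integral near-orthonormality estimate $\frac{1}{m(B_r(x))}\int_{B_r(x)}|\langle\nabla u_a,\nabla u_b\rangle-\delta_{ab}|\,dm\le\delta$ together with a Hessian smallness bound $\frac{1}{m(B_r(x))}\int_{B_r(x)}|\mathrm{Hess}\,u_a|^2\,dm\le\delta$. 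The existence of such maps for small $\delta$ at a.e.\ point, and the fact that in the blow-up a $\delta_i$-splitting map with $\delta_i\to 0$ forces an isometric splitting $Y\cong\mathbb{R}^k\times Z$, is exactly where the $RCD$ machinery enters: one applies Gigli's splitting theorem (a line in an $RCD(0,N)$ space splits off an $\mathbb{R}$-factor and lowers $N$ by one) iterated $k$ times, using the self-improved Bochner inequality to control the Hessian error terms. Defining the $k$-regular set $\mathcal{R}_k$ as the points whose every tangent cone is isometric to $\mathbb{R}^k$, I would then run a maximality argument: at a.e.\ $x$ choose the largest $k$ admitting a $\delta$-splitting map for all $\delta>0$, and show the residual factor $Z$ must reduce to a point, so that the tangent cone is exactly $\mathbb{R}^k$; this yields $m\bigl(X\setminus\bigcup_k\mathcal{R}_k\bigr)=0$.

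Finally I would upgrade this to $m$-a.e.\ uniqueness of the tangent cone and to the dimension bound. The bound $k\le N$ is immediate, since $\mathbb{R}^k$ carries the $RCD(0,k)$ structure and the Bishop--Gromov exponent cannot exceed $N$. For uniqueness, the point is that on $\mathcal{R}_k$ the splitting map $u$ becomes, after the non-degeneracy estimate, a bi-Lipschitz chart onto an open subset of $\mathbb{R}^k$ on a set of full measure, with comparable measures; once a single tangent cone is Euclidean and such a measure-comparable bi-Lipschitz chart exists, an almost-monotonicity/almost-rigidity argument forces all tangent cones at that point to coincide with $\mathbb{R}^k$. The step I expect to be the main obstacle is precisely the passage from the integral $\delta$-splitting estimate to the exact isometric splitting of the tangent cone and the vanishing of the residual factor $Z$ at a.e.\ point: this requires the full second-order calculus on $RCD$ spaces --- the measure-valued Hessian, the self-improved Bakry--\'Emery Bochner inequality, and the rigidity case of Gigli's splitting theorem --- and controlling the error terms uniformly along the blow-up sequence so that they survive in the limit is the technically demanding core of the argument.
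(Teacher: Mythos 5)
The paper does not prove this statement: Theorem \ref{thmrcd} is imported verbatim from Mondino--Naber \cite{am} and used as a black box in the proofs of Theorem \ref{thm1.2} and Lemma \ref{lemma srcd}, so there is no in-paper argument to compare yours against. What you have written is an outline of the actual proof from the literature, and at that level it is broadly faithful: compactness of blow-ups via Bishop--Gromov and doubling, stability of the (reduced) curvature-dimension condition under pointed measured Gromov--Hausdorff limits, iterated application of Gigli's splitting theorem to force a factor $\mathbb{R}^k$ with $k\le N$, and a maximality argument to kill the residual factor $Z$ are indeed the skeleton of \cite{am}. But it is a program rather than a proof: the almost-splitting theorem (the quantitative statement that near-orthonormal almost-harmonic/almost-affine functions force a Euclidean factor in the limit), the maximal-function arguments needed to find points and scales where the hypotheses hold, and the propagation of the integral estimates along the blow-up sequence are exactly the technical content of that paper, and you defer all of it.

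Two specific points deserve flagging. First, your intermediate claim that every tangent cone is a metric measure cone via ``volume cone implies metric cone'' is neither used in \cite{am} nor justified as stated: to invoke that rigidity you need the Bishop--Gromov ratio at $x$ to be asymptotically constant, which you have not established at $m$-a.e.\ point, and the normalization $m/m(B_{r_i}(x))$ does not hand you a volume cone for free. The step is fortunately unnecessary. Second, the original argument of \cite{am} builds its almost-splitting maps from distance/excess functions rather than from harmonic $\delta$-splitting maps with Hessian bounds; the harmonic-splitting route you describe was carried out in the $RCD$ setting only in later work and presupposes the full second-order calculus. Relatedly, the statement as quoted asserts that \emph{the} tangent cone at $m$-a.e.\ point is Euclidean, i.e.\ uniqueness; passing from ``some tangent cone is $\mathbb{R}^k$'' plus bi-Lipschitz charts to uniqueness of the tangent requires absolute continuity of $m$ with respect to $\mathcal{H}^k$ on the charts, which is an additional theorem not contained in your sketch. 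None of this affects the paper, which only needs the result as a citation, but as a self-contained proof your proposal has genuine gaps at precisely the steps you yourself identify as the technical core.
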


\begin{remark}\label{remarkcd}
In fact, reduced curvature-dimension condition $CD(K,N)^{*}$ is
used in \cite{am}, which is slightly weaker than the curvature-dimension
condition $CD(K,N)$.
\end{remark}

We will prove that the minimal weak upper gradient and the horizontal gradient (Def. \ref{defhg}) coincide on the Carnot-Carath\'{e}odory spaces. The following lemma tells us that if the function is continuous, then the two concepts are the same up to a measure zero set. However it is still not enough, and we need a stronger result to prove the coincidence.

\begin{lemma}\label{lemma hg}
Let $(M,\Delta,g)$ be a regular sub-Riemannian manifold with a Riemmanian volume $m$. Suppose $f\in L^{1}_{loc}(M,m)$ is a nonnegative function. If $f$ is an upper gradient of a function $u$ which is continuous. Then the distributional derivatives $\nabla_{H}(u)$ is locally integrable and $|\nabla_{H}(u)|$ $\leq$ $f$  $m-a.e$ in $M$.
\end{lemma}

\begin{proof}
Obviously, we only need to check the conclusion locally. Without loss of generality, we may assume that $M$ is an open domain of Euclidean space, and the horizontal distribution $\Delta$ is generated by global orthogonal vector fields $\{X_{1},\cdots,X_{r}\}$, where $r$ is the rank of $\Delta$. The measure $m$ can be written in the form $\rho dV$, where $\rho$ is a smooth function and $dV$ is the Lebesgue measure. By the definition of distributional derivatives, we have:
\begin{align*}
 \underset{M}{\int} X_{i}(u)\cdot h\cdot\rho dV = \underset{M}{\int} u\cdot X_{i}^{\ast}(h)\cdot\rho dV,
\end{align*}
where $h$ is any smooth function with compact support on $M$, and $X_{i}^{\ast}$ is the adjoint operator of $X_{i}$ with respect to $\rho dV$.
This formula can be rewritten as:
\begin{align*}
\underset{M}{\int}(\rho\cdot X_{i})(u)\cdot h\cdot dV = \underset{M}{\int}u\cdot (\rho\cdot X_{i}^\ast)(h)\cdot dV
\end{align*}
So, $\rho\cdot X_{i}^{\ast}$ is the adjoint operator of $\rho\cdot X_{i}$ with respect to $dV$.

Let $Y_{i}$ = $\rho \cdot X_{i}$, then we get a new basis of the horizontal distribution $\Delta$. Define a new sub-Riemannian metric $g^{'}$ on $\Delta$, such that\{ $Y_{1}, \cdots, Y_{r}$\} is orthogonal. On this new sub-Riemannian manifold $(M,\Delta,g^{'})$, $\rho\cdot f$ is an upper gradient of $u$, and the  distributional horizontal gradient is $$\nabla_{H}^{'}(u) = \rho\cdot\nabla_{H}(u).$$ Hence, we reduce the problem to the case : $M$ is a domain of Euclidean space with Carnot-Carath\'{e}odory metric and the Lebesgue measure. This is exactly the Theorem 11.7 in \cite{ph}.

\end{proof}

In \cite{ew}, Ambrosio, Gigli and Savare proved the following density in energy of Lipschitz functions.

\begin{proposition}(Ambrosio-Gigli-Savare, \cite{ew})\label{propld}
If $f\in\ W^{1,2}(M,d,m)$, then there exist Lipschitz functions $f_{n}$ satisfying\\
\begin{equation}
 \underset{n\rightarrow\infty}{\lim}\left [{\int_{M}|f_{n}-f|^{2}dm + \int_{M}\big(|\nabla{f_{n}}|-|Df|_{w}\big)^{2}dm} \right ]= 0,
\end{equation}
where $|\nabla{f_{n}}|$ is defined by

\begin{equation}
|\nabla{f_{n}}|(x) = \underset{y\rightarrow x}{\overline{\lim}}\frac{|f(x)-f(y)|}{d(x,y)}.
\end{equation}

\end{proposition}

\begin{remark}
In general, the Lipschitz functions are not dense in $W^{1,2}(M,d,m)$ w.r.t $W^{1,2}$ norm, as pointed out by Gigli (\cite{ng}). So we cannot apply Lemma \ref{lemma hg} directly to prove Theorem \ref{thmih}.

\end{remark}

A further lemma known to the experts is needed, and we give a proof here for the completeness.

\begin{lemma}\label{lemmabc}
Let $(M,g)$ be a Riemannian manifold with Riemannian measure $m$, which is induced from the Riemannian volume. Suppose
$f$ is an $m$-measurable function, then there exists a Borel function $\tilde{f}$ such that
\begin{equation}
f\leq\tilde{f} \qquad  and \qquad f = \tilde{f} \quad m-a.e..
\end{equation}
\end{lemma}
\begin{proof}
Because $f$ is measurable, for any $r\in\mathbf{Q}$, the set
\begin{equation}
E_{r}=\{x\in M: f(x)\geq r\}
\end{equation}
is measurable. By the regularity of the measure $m$, we have a Borel set $A_{r}\supseteq E_{r}$ such that $m(A_{r}\setminus E_{r})$ $=$ $0$. We define
\begin{equation}
\tilde{f}(x):=\sup\{r\in\mathbf{R}:x \in \underset{q\in\mathbf{Q}:q<r}{\bigcap} A_{q}\}.
\end{equation}
By the definition of $E_{r}$, we have that
\begin{equation}
f(x)=\sup\{r\in\mathbf{R}:x \in \underset{q\in\mathbf{Q}:q<r}{\bigcap} E_{q}\}.
\end{equation}

Since $E_{q}\subset A_{q}$, for all $q\in\mathbf{Q}$, we see that $f(x)\leq \tilde{f}(x)$.
If there is an $x\in M$, such that $f(x) < \tilde{f}(x)$, then there is a $e\in \mathbf{R}$ satisfying $f(x)< e <\tilde{f}(x)$.
Then we have

\begin{equation}
x\in (\underset{q\in\mathbf{Q}:q<e}{\bigcap}A_{q}) \subset \underset{q\in\mathbf{Q}:q<e}{\bigcup}(A_q\setminus E_q).
\end{equation}
Since $m(A_q\setminus E_q)$ = $0$ for all $q\in \mathbf{Q}$, $f(x)=\tilde{f}(x)$ $m-a.e$.

By the definition of $\tilde {f}(x)$, if $x\in\underset{q\in\mathbf{Q}:q<r}{\bigcap} A_{q}$, then $\tilde{f}(x)\geq r$. On the other hand, if $\tilde{f}(x)\geq r$ then $x\in \underset{q\in\mathbf{Q}:q<r}{\bigcap} A_{q}$. So
\begin{equation}
\underset{q\in\mathbf{Q}:q<r}{\bigcap} A_{q} = \tilde{f}^{-1}([r,+\infty])
\end{equation}
i.e., $\tilde{f}$ is a Borel function.

\end{proof}

\begin{theorem}\label{thmih}
Let $(M,\Delta,g)$ be a regular sub-Riemannian manifold with measure
$m$ induced by Riemannian volume and Carnot-Carath\'{e}odory distance $d$. Then the two spaces $W^{1,2}(M,d,m)$ and $W_{H}^{1,2}(M,m)$ (Def. \ref{defhg}) coincide. So $(M,d,m)$ is an infinitesimally Hilbert space.
\end{theorem}

\begin{proof}
Suppose $f$ $\in$ $W^{1,2}(M,d,m)$, then there is a sequence of Lipschitz functions $\{f_{n}\}$ converging to $f$ in $L^{2}(M,m)$ by Proposition \ref{propld}.
Because $f_n$ is Lipschitz function for any $n$, $\nabla_H f$ exists and $|\nabla f_{n}|$ is an upper gradient of $f_{n}$. By Lemma \ref{lemma hg}, \begin{equation}
    |\nabla_{H}{f_{n}}|  \leq  |\nabla f_{n}|.
\end{equation}
Hence,
\begin{equation}
\underset{M}{\int}|\nabla_H{f_n}|^2dm \leq \underset{M}{\int}|\nabla f_n|^2dm.
\end{equation}

And also,
\begin{equation}
\begin{split}
\underset{n\rightarrow\infty}{\lim}{\int_{M}\big||\nabla{f_{n}}|-|Df|_{w}\big|^{2}dm} = 0, \\
\quad \underset{M}{\int}|Df|_w^2dm < \infty,
\end{split}
\end{equation}
then
\begin{equation}
  \underset{M}{\int}|\nabla_H{f_n}|^2dm < \infty.
\end{equation}

So $\{\nabla_{H}{f_{n}}\}$ is a bounded set in the space $L^{2}(M,\Delta)$ of square integrable sections of $\Delta$. On the other hand, $L^{2}(M,\Delta)$ is a Hibert space, so $L^{2}(M,\Delta)$ is reflexive. Hence, there is a subsequence of $\{\nabla_{H}f_{n}\}$ converging to a horizontal section $\alpha$, and $\nabla_{H}f$ $=$ $\alpha$. We can assume that
\begin{equation}
\underset{n\rightarrow\infty}{\lim}\nabla_{H}f_{n} = \alpha,
\end{equation}
\begin{equation}
|\nabla_{H}f_{n}|\rightarrow |\alpha| \qquad   in\quad L^{2}(M,m).
\end{equation}

By Lemma \ref{lemmabc}, for each $n$, we can find a Borel function $\tilde{f_{n}}$ such that

\begin{equation}
 |\nabla_{H}f_{n}| \leq \tilde{f_{n}} \qquad and \qquad |\nabla_{H}f_{n}| = \tilde{f_{n}}\qquad m-a.e..
\end{equation}

For any $n$, the $\tilde f_n$ is an upper gradient of $f_n$. By Remark \ref{ccmd}, any absolutely continuous path $\gamma:[0,1]\rightarrow M$ is almost differentiable and horizontal. Then,
\begin{equation}
\begin{split}
|f(\gamma(0))-f(\gamma(1))| \leq \int_0^1|\nabla_H f_n(\gamma(s))|\cdot|\dot{\gamma}(s)|ds\\
\leq \int_0^1|\tilde{f_n}(\gamma(s))|\cdot|\dot{\gamma}(s)|ds.
\end{split}
\end{equation}

Therefore, we get a sequence of Lipschitz functions $\{f_n\}$ and a sequence of Borel functions $\{\tilde f_n\}$, satisfying

\begin{enumerate}
\item $f_{n}\rightarrow f$ in $L^{2}(M,m)$ and $\tilde f_n \rightarrow |\alpha|$ in $L^{2}(M,m)$;
\item $\tilde{f_n}$ is a upper gradient of $f_n$, for each $n$.
\end{enumerate}

By Definition \ref{defrug}, we have $\nabla_H f = |\alpha|$ is a 2-relax upper gradient of $f$. And also, the $|\nabla f|_{C,2}$ is minimal. So,

\begin{equation}
\underset{M}{\int}|\nabla{f}|_{C,2}^2dm \leq \underset{M}{\int}|\nabla_H{f}|^2dm.
\end{equation}

By the formula (5.22) and (5.23),
\begin{equation}
\underset{M}{\int}|\nabla_{H}f|^2dm \leq \underset{M}{\int}|Df|_{w}^2dm.
\end{equation}


By \cite{ew} (Theorem 7.4 and the section "Further comments and extensions"), we get $|\nabla{f}|_{C,2} = |Df|_{w}$ $m-a.e.$ in $M$. So
 \begin{equation}
 \underset{M}{\int}|\nabla{f}|_{C,2}^2dm = \underset{M}{\int}|\nabla_H{f}|^2dm.
\end{equation}

 And also, the minimal relaxed upper gradient is unique up to a set of measure zero (\cite{cc}, Theorem 2.10), then
 \begin{equation}
 |\nabla_{H}f| = |Df|_{w} \qquad m-a.e..
\end{equation}

Therefore, we get an embedding of Banach space $W^{1,2}(M,d,m)$ into Banach space $W^{1,2}_H(M,m)$, which preserves the norms. But the space $\mathcal{C}_0^{\infty}(M)$ of smooth functions with compact support is dense in $W^{1,2}_H(M,m)$ and $W^{1,2}(M,d,m)$ contains $\mathcal{C}_0^{\infty}(M)$. So $W^{1,2}(M,d,m)$ = $W^{1,2}_H(M,m)$.
On the other hand, the Sobolev space $W_{H}^{1,2}(M,m)$ with norm
\begin{equation}
\|f\|_H =\left(\int_M ({f^2 + \langle{\nabla_Hf,\nabla_Hf}\rangle) dm}\right)^\frac{1}{2}
\end{equation}
 is a Hilbert space. Therefore, $(M,d,m)$ is an infinitesimally Hilbertian space.
\end{proof}

\begin{proof}[Proof of Theorem \ref{thm1.2}]
Suppose $(M,d,m)$ satisfies curvature-dimension condition $CD(K,N)$, where $K$, $N$ $\in$ $\mathbf{R}$ and $N>1$. By Theorem \ref{thmih}, the metric measure space $(M,d,m)$ is a $RCD(K,N)$ space.
It follows from Theorem \ref{thmrcd} that for $m-a.e.$ point the tangent cone is a Euclidean space. By Theorem \ref{thmtc}, we get a bi-Lipschitz map between Carnot group and Euclidean space. This contradicts to Theorem \ref{thm 1.3}.
\end{proof}

\begin{proof}[Proof of Theorem \ref{thm1.1}]
\hspace*{1em}Suppose a complete separable metric measure space $(M,d,m)$ satisfies $RCD(0,N)$ for $N > 1$. Fixed a point $x\in M$, consider the family of pointed  metric measure spaces:
\begin{equation}
\{(M,\lambda\cdot d, m, x):\quad \lambda\in\mathbf{R}_{>0}\}
\end{equation}
Then, by Lemma \ref{lemma cdp}, the above family of pointed metric measure spaces also satisfies $RCD(0,N)$. And also, by the Theorem 2.3 and Corollary 2.4 in \cite{kts}($\uppercase\expandafter{\romannumeral2}$), the family is uniformly doubling. So, by Lemma 3.32 in \cite{gms}, there is a sequence ${\lambda_{n}}$ of positive real numbers and $\underset{n\rightarrow\infty}{\lim}\lambda_n = 0$ such that the $(CM,d_{0},m)$ :=
$\underset{n\rightarrow\infty}\lim(M,\lambda_n\cdot d,m,x)$ exists. Then, by the stability of lower Ricci curvature bounds under pointed measured Gromov-Hausdorff convergence \cite{gms}, $(CM,d_{0},m)$ satisfies $RCD(0,N)$.\\
   \hspace*{1em} $(N,g)$ is a non-abelian nilpotent Lie group with a left invariant Riemannian metric. Pansu (\cite{pp}) proved that the
asymptotic cone of $(N,g)$ is unique and isometric to Carnot group
with a left invariant Carnot-Carath\'eodory metric.  By Corollary \ref{coroesrnercd}, we complete the proof.
\end{proof}

\section*{Acknowledgement}
{We would like to thank Professor Xiaochun Rong for his valuable
suggestions and comments. In addition, the first author is grateful to Professor
Fuquan Fang for introducing him into the subject of Sub-Riemannian geometry. We especially thank the referees for giving us very valuable advices which help us a lot to improve the presentations.


\end{document}